\newcommand{\nc}{\newcommand}
\nc{\dmo}{\DeclareMathOperator}
\dmo{\ra}{\rightarrow}
\dmo{\Prob}{\mathbb{P}}
\dmo{\E}{\mathbb{E}}
\dmo{\N}{\mathbb{N}}
\dmo{\Z}{\mathbb{Z}}
\dmo{\Q}{\mathbb{Q}}
\dmo{\R}{\mathbb{R}}
\dmo{\C}{\mathcal{C}}
\dmo{\w}{\omega}
\dmo{\diam}{\operatorname{diam}}
\dmo{\Out}{\operatorname{Out}}
\dmo{\Mod}{\operatorname{Mod}}
\dmo{\proj}{\operatorname{proj}}
\dmo{\supp}{\operatorname{supp}}
\nc{\nt}{\newtheorem}
\newtheorem{thm}{{\bf Theorem}}[section]
\newtheorem{lem}[thm]{{\bf Lemma}}
\newtheorem{cor}[thm]{{\bf Corollary}}
\newtheorem{prop}[thm]{{\bf Proposition}}
\newtheorem{claim}[thm]{Claim}
\newtheorem{definition}[thm]{Definition}
\numberwithin{equation}{section}
\begin{document}

\title{Random walks on groups and superlinear-divergent geodesics}

\author   {Kunal Chawla}
 \address{Department of Mathematics, Princeton University, Princeton, NJ}
 \email{kc7106@princeton.edu}

\author   {Inhyeok Choi}
 \address{June E Huh Center for Mathematical Challenges of KIAS, Seoul, South Korea}
 \email{inhyeokchoi48@gmail.com} 

 \author   {Vivian He}
 \address{Department of Mathematics, University of Toronto, Toronto, ON }
 \email{vivian.he@mail.utoronto.ca} 
 
 \author   {Kasra Rafi}
 \address{Department of Mathematics, University of Toronto, Toronto, ON }
 \email{rafi@math.toronto.edu}

\maketitle

\begin{abstract}
In this paper, we study random walks on groups that contain superlinear-divergent geodesics, in the line of thoughts of Goldsborough-Sisto. The existence of a superlinear-divergent geodesic is a quasi-isometry invariant which allows us to execute Gou{\"e}zel's pivoting technique. We develop the theory of superlinear divergence and establish a central limit theorem for random walks on these groups. 
\end{abstract}

\section{Introduction}
Classical limit laws in probability theory concern the asymptotic behaviour of the random variable 
\[
Z_{n} = \xi_{1} + \xi_{2} + \cdots+ \xi_{n}.
\]
for i.i.d. random variables $\xi_{1}, \xi_{2}, \ldots$ on $\mathbb{R}$.
As a non-commuting counterpart, Bellman, Furstenberg and Kesten initated the study of random walks on a matrix group $G$  (\cite{bellman1954limit}, \cite{kesten1959symmetric}, \cite{furstenberg1960products}, \cite{furstenberg1963noncommuting}). Given a probability measure $\mu$ on $G$, the random walk generated by $\mu$ is a Markov chain on $G$ with transition probabilities $p(x, y) := \mu(x^{-1}y)$. Our goal is to understand the $n$-th step distribution\[
Z_n = g_1 \cdots g_n
\] where $g_i$ are independent random variables distributed according to $\mu$.

There are several generalizations of Bellman, Furstenberg and Kesten's theory of non-commuting random walks: random walks on Lie groups (cf. \cite{benoist2016random} and the references therein); random conformal dynamics (\cite{deroin2007random}); subadditive and multiplicative ergodic theorems due to Kingman  \cite{kingman1968subadditive} and Oseledec \cite{oseledec1968multiplicative}, respectively (see their generalizations \cite{karlsson2006lln}, \cite{gouezel2020subadditive} that incorporates random processes on isometries and non-expanding maps on a space) to name a few. In geometric group theory, there is a strong analogy between rank-1 Lie groups and groups with a non-elementary action on a Gromov hyperbolic space $X$ (\cite{maher2018random}). Given a basepoint $o \in X$, the sample path $(Z_{n} o)_{n \geq 0}$ on $X$ tracks a geodesic and the displacement $d(o, Z_{n} o)$ at step $n$ grows like a sum of i.i.d. random variables with positive expectation. From this one can derive a number of consequences, such as exponential bounds on the drift (\cite{boulanger2022large, gouezel2022exponential}), limit laws (\cite{karlsson1999multiplicative, Bjorklund2009CLThyperbolic,goldsborough2021markov,Gouezel2017Analyticity,Horbez2018CLTMCG}), and identification of the Poisson boundary (\cite{maher2018random, kaimanovich2000poisson, chawla2022poisson}). If the $G$-action on $X$ is compatible with the geometry of $G$ in a suitable sense, one can transfer these results on $X$ to $G$. One of the most successful results in this direction is due to Mathieu and Sisto \cite{Mathieu2020deviation}, who proved a central limit theorem for random walks on acylindrically hyperbolic groups. We refer the readers to \cite{osin2016acylindrically} and \cite{behrstock2019hierarchically} for examples of acylindrically hyperbolic groups and hierarchically hyperbolic groups.

Although the notion of acylindrical hyperbolicity captures a wide range of discrete groups,  acylindrical hyperbolicity of a group is not known to be quasi-isometry invariant or even commensurability invariant. This is because there is no known natural way to transfer a group action through a quasi-isometry. To overcome this, the second author proposed a theory for random walks using a group-theoretic property that does not involve hyperbolic actions, namely, possessing a strongly contracting element \cite{choi2022random1}. Nevertheless, this theory is still not invariant under quasi-isometry.

Meanwhile, certain hyperbolic-like properties are known to be quasi-isometry invariant, such as existence of a Morse quasi-geodesic. Hence one can expect that many consequences of hyperbolicity should hold under quasi-isometry invariant assumptions. To address this, Goldsborough and Sisto \cite{goldsborough2021markov} developed a QI-invariant random walk theory for groups. Given a bijective quasi-isometry $f$ from a group $G$ to a group $H$, the pushforward of the random walk from $G$ to $H$ is not necessarily a random walk, but only an inhomogeneous Markov chain. Nonetheless, if one---equivalently both---groups are non-amenable, the induced Markov chain satisfies some sort of irreducibility, which the authors call \emph{tameness}. At this moment, Goldsborough and Sisto require that $G$ acts on a hyperbolic space $X$ and contains what they call a `superlinear-divergent' element $g$, that is, any path must spend a superlinear amount of time to deviate from the axis of $g$ (see section \ref{section:background} for the definition). Goldsborough and Sisto observed that along a random path arising from a tame Markov chain on $G$, some translates of the superlinear-divergent axis are aligned on $X$, and such alignment is also realized on the Cayley graph of $G$. As a consequence, they  established a central limit theorem for random walks on $H$, which is only quasi-isometric to $G$.

In the setting of Goldsborough and Sisto, still, $G$ is required to possess an action on a hyperbolic space. Our purpose is to remove this assumption and establish a central limit theorem for groups satisfying a QI-invariant property, without referring to a hyperbolic space. 

\begin{theorem}\label{thm:main}
Let $G$ be a finitely generated group with exponential growth, and suppose that $G$ has a superlinear-divergent quasi-geodesic $\gamma: \Z \rightarrow G$. Let $(Z_n)_{n\geq 1}$ be a simple random walk on $G$. Then there exist constants $\lambda, \sigma \geq 0$ such that \[
\frac{d_{X}(o, Z_{n} o)-\lambda n}{\sigma \sqrt{n}} \rightarrow \mathcal{N}(0, 1) \quad \textrm{in distribution}.
\]
\end{theorem}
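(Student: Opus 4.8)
\emph{Proof idea.} The strategy is to first upgrade the single superlinear-divergent quasi-geodesic $\gamma$ into a contracting-projection theory for all of its $G$-translates, and then to run Gou\"ezel's pivoting construction directly inside the Cayley graph of $G$, so that no auxiliary hyperbolic space is needed (the metric $d_X$ in the statement being the word metric on $G$). From the definition of superlinear divergence I would first show that closest-point projection $\proj_{g\gamma}$ onto any translate is coarsely well defined, coarsely Lipschitz, and enjoys the \emph{bounded-image} property that a geodesic segment staying outside a fixed neighbourhood of $g\gamma$ projects to a set of uniformly bounded diameter. Since superlinear divergence is weaker than strong contraction, these bounds will depend on the divergence function; the point is only that they are uniform across the range of scales entering the pivoting. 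With this in hand I would define an alignment relation $\alpha \prec \beta$ for two translates and prove a Behrstock-type inequality, which in turn forces that along an \emph{aligned chain} $\alpha_1 \prec \cdots \prec \alpha_m$ any path passing near all the $\alpha_i$ fellow-travels each of them, so that its displacement is at least the sum of the projected diameters minus $O(m)$.

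Second, I would use the exponential growth of $G$ to build a \emph{Schottky set}: a finite $\mathcal{S} \subseteq G$ and a block length $\ell$ such that translating by any $s \in \mathcal{S}$ extends an aligned chain, aligned chains compose across $\mathcal{S}$-connectors, and the $\ell$-step distribution of the simple random walk assigns probability bounded below to landing in a Schottky connector (this last point is automatic once $\ell$ exceeds the radius needed for the $\ell$-ball to contain $\mathcal{S}$). Exponential growth is exactly what supplies exponentially many translates of $\gamma$ in general position, so that $\mathcal{S}$ can be taken large enough for the counting in Gou\"ezel's scheme; it also rules out the degenerate virtually cyclic case and yields a positive lower bound on the drift.

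Third, I would run Gou\"ezel's pivoting argument. Along the sample path one inductively selects pivot times $0 < \tau_1 < \tau_2 < \cdots$ so that $(Z_{\tau_k}\gamma)_k$ is an aligned chain with consecutive pairs joined by Schottky blocks; the pivoting lemma then says the increments strictly between two pivots may be resampled without destroying this structure, which is the source of the needed independence. From it I would extract: (a) exponential tails for $\tau_{k+1} - \tau_k$, hence $N_n / n \to p > 0$ almost surely where $N_n = \#\{ k : \tau_k \le n \}$; (b) the pivot contributions $c_k := d_X(o, Z_{\tau_{k+1}} o) - d_X(o, Z_{\tau_k} o)$ form a stationary, finitely dependent sequence with exponential tails, in particular with finite variance and positive mean; and (c) $\bigl| d_X(o, Z_n o) - \sum_{k \le N_n} c_k \bigr| = o(\sqrt n)$ in probability. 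Step (c) is where superlinear divergence is genuinely used, rather than mere quasi-convexity: the aligned-chain fellow-travelling is what bounds how far the path can backtrack across an already-established pivot.

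Finally I would assemble the central limit theorem: a CLT for stationary finitely dependent sequences applied to $\sum_{k \le m} c_k$, combined with the law of large numbers $N_n / n \to p$ and an Anscombe-type random-index CLT (equivalently, the renewal--reward CLT), gives a Gaussian limit for $\sum_{k \le N_n} c_k$ after subtracting $\lambda n$ and dividing by $\sigma \sqrt n$, with $\lambda = p\,\E[c_1]$ and $\sigma^2 \ge 0$ the combined asymptotic variance of the pivot-contribution sum and of the counting process $N_n$; Slutsky's theorem together with (c) then transfers this limit to $d_X(o, Z_n o)$, which is the claim. (Alternatively, the pivoting estimates give a deviation inequality from which the CLT follows along the lines of Mathieu--Sisto.) I expect the main obstacle to be the first two steps --- making the contraction and alignment theory robust enough to drive Gou\"ezel's machine when only superlinear, rather than bounded, divergence is available, and extracting from exponential growth a Schottky set that aligns with positive probability --- while once that geometric infrastructure is in place, the pivoting and the probabilistic endgame follow the by-now standard template.
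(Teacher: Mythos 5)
There is a genuine gap in your geometric foundation, and it is precisely the point this paper has to work around. You assert a bounded-image property: that a geodesic staying outside a fixed neighbourhood of a translate $g\gamma$ projects to a set of uniformly bounded diameter. Superlinear divergence does not give this — the paper's Appendix \ref{appendix:contracting} constructs a superlinear-divergent geodesic that is \emph{not} strongly contracting, so no Behrstock-type inequality with uniform constants is available. What one actually gets (Lemma \ref{lem:quickLanding2}) is contraction only up to an additive error of order $\delta\bigl(\log d(x,Z)+\log d(y,Z)\bigr)$, i.e.\ logarithmic in the distance of the endpoints from the axis. Your hedge that "the bounds are uniform across the range of scales entering the pivoting" is exactly what fails: by time $n$ the relevant diameters are of order $n$, so the projection errors are of order $\log n$, and any aligned segment must be at least that long to make the error negligible relative to its length.

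This defect propagates into your probabilistic scheme. A Schottky set with a \emph{fixed} block length $\ell$ and per-block landing probability bounded below cannot drive the pivoting here: the aligned segments must be translates of $\gamma|_{[0,\epsilon\log n]}$, so the "Schottky" blocks have length growing like $\log n$ and each occurs with probability only about $n^{-\alpha}$. Consequently there is no renewal structure with exponential tails for $\tau_{k+1}-\tau_k$, and your claims (a) and (b) — stationary, finitely dependent pivot increments with exponential tails, feeding an Anscombe/renewal--reward CLT — are unjustified. The paper's actual route accepts the weaker input: Hoeffding gives roughly $n^{2\alpha}$ linkage blocks among the first $n^{3\alpha}$ increments, the pivoting (Lemmas \ref{lem:extremal}--\ref{lem:expTailEst}) is run at scale $\log n$ with alignment constants $\epsilon^{3}\log n$, $\epsilon^{2}\log n$, and the output is only the stretched-exponential deviation inequality $\Prob\bigl[(x,Z_n)_{id}\ge n^{3\alpha}\bigr]\le Ke^{-n^{\alpha}/K}$ (Lemma \ref{lem:deviation}). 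That bound on Gromov products, via the second-moment estimate of Corollary \ref{cor:deviation} and the dyadic decomposition of Mathieu--Sisto, is what yields the CLT — i.e.\ the alternative you mention only parenthetically at the end is not an alternative but the argument that survives; the renewal-type endgame you propose as the main route does not go through with the estimates obtainable from superlinear divergence alone.
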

Note that we only assume existence of a superlinear-divergent quasi-geodesic, as opposed to a superlinear-divergent element. This makes our setting invariant under quasi-isometry; see Lemma \ref{lem:qiInvariance}. In addition, our proof only uses the classical theory of random walks and does not refer to tame Markov chains. 

This theorem applies to groups that are not flat but not of rank 1 either. For example, we can construct a superlinear-divergent element in any right-angled Coxeter group (RACG) that contains a periodic geodesic with geodesic divergence at least $r^3$:
\begin{prop}\label{prop:RACG}
    Let $W_{\Gamma}$ be a Right-angled Coxeter group of thickness $k\geq 2$. Then any Cayley graph of $\Gamma$ contains a periodic geodesic $\sigma$ which is $(f, \theta)$--divergent for some $\theta>0$ and $f(r) \gtrsim r^k$. In particular, simple random walks on $W_\Gamma$ satisfy the central limit theorem. 
\end{prop} 

By $f \gtrsim r^k$ we mean that $f\geq cr^k$ for some sufficiently small $c > 0$. The proof of this lemma is Appendix \ref{appendix:racg}. Such RACGs can be produced following the method in 
\cite{Levcovitz2022thick}, and \cite{Behrstock2017random} shows that there is an abundance of such groups.

Lastly, let us mention the relationship between superlinear-divergence and the strongly contracting property, which is a core ingredient of the second author's previous work \cite{choi2022random1}. In general, a superlinear-divergent axis need not be strongly contracting and vice versa. Hence, the present theory and the theory in \cite{choi2022random1} are logically independent. We elaborate this independence in Appendix \ref{appendix:contracting}.

\subsection*{Outline of the paper}

Our main idea is to bring Gou{\"e}zel's recent theory of pivotal time construction for random walks \cite{gouezel2022exponential}. Here, the key ingredient is a local-to-global principle for alignments between quasigeodesics. Lacking Gromov hyperbolicity of the ambient group, we establish such a principle among sufficiently long superlinear-divergent geodesics (Proposition \ref{prop:induction}). For this purpose, in Section \ref{section:background} we continue to develop the theory of superlinear-divergent sets after Goldsborough and Sisto \cite{goldsborough2021markov}. In Section \ref{section:alignment}, we discuss alignment of superlinear-divergent geodesics. In Section \ref{section:probablistic}, we estimate the probability for alignment to happen during a random walk. This yields a deviation inequality (Lemma \ref{lem:deviation}) that leads to the desired central limit theorem.

\subsection*{Acknowledgement}
This project was initiated at the AIM workshop
``Random walks beyond hyperbolic groups", after a lecture by Alex Sisto on his work with Antoine Goldsborough. We would like to thank Alex Sisto, Ilya Gekhtman, S{\'e}bastien Gou{\"e}zel, and Abdul Zalloum for many helpful discussions. We are also grateful to Anders Karlsson for suggesting references and explaining the background.

The first author was partially supported by an NSERC CGS-M grant. 
The second author is supported by Samsung Science \& Technology Foundation (SSTF-BA1702-01 and SSTF-BA1301-51) and by a KIAS Individual Grant (SG091901) via the June E Huh Center for Mathematical Challenges at Korea Institute for Advanced Study. The third author was partially supported by an NSERC CGS-M Grant. The fourth author was partially supported by
NSERC Discovery grant, RGPIN 06486.

\section{Superlinear-Divergence}\label{section:background}
For this section, let $X$ be a geodesic metric space. For points $x,y \in X$, we will use the notation $[x, y]$ to mean an arbitrary geodesic between $x$ and $y$ (note: not unique in general). If $\alpha$ is a quasi-geodesic, and $x,y \in \alpha$, we use $[x, y]|_\alpha $ to denote the quasi-geodesic segment from $x$ to $y$ along $\alpha$. Throughout, all paths are continuous maps from an interval into $X$.

We adopt the definition in \cite{goldsborough2021markov}. For a set $Z \subseteq X$ and constants $A, B > 0$, we say a map $\pi = \pi_{Z} : X \rightarrow Z$ is an $(A, B)$--coarsely Lipschitz projection if
\[
\forall x, y \in X, \quad  d(\pi(x), \pi(y)) \le A d(x, y) + B
\]
and
\[
\forall z \in Z, \quad  d(\pi(z),z) \leq B.
\]
We say that a map $\pi$ is coarsely Lipschitz if it is $(A,B)$-coarsely Lipschitz for some $A,B>0$. Note that a coarsely Lipschitz projection is comparable to the closest point projection: for any $x \in X$ we have 
\begin{align*}
d(x, \pi(x)) 
&\le \inf_{z \in Z} \big( d(x, z) + d(z, \pi(z)) + d(\pi(z), \pi(x))\big) \\
&\le \inf_{z \in Z} \big( d(x, z) + B + (Ad(x,z)+B) \big)  \\
&\le (A+1) d(x, Z) + 2B.
\end{align*}

We say a function $f : \mathbb{R}_{+} \rightarrow \mathbb{R}_{+}$ is \emph{superlinear} if it is concave, increasing, and
\[
\lim_{x \to \infty}\frac{f(x)}{x} = \infty.
\]

\begin{definition}[{cf. \cite[Definition 3.1]{goldsborough2021markov}}]\label{dfn:superdivergent}
Let $Z$ be a closed subset of a geodesic metric space $X$, let $\theta > 0$ and let $f : \mathbb{R}_{+} \rightarrow \mathbb{R}_{+}$ be superlinear. We say that $Z$ is \emph{$(f,\theta)$--divergent} if there exists an $(A, B)$--coarsely Lipschitz projection $\pi_{Z} : X \rightarrow Z$ such that for any $R>0$ and any path $p$ outside of the $R$--neighborhood of $Z$, if the endpoints $p_{-}$ and $p_{+}$ of the path $p$ satisfy  
\[
d(\pi_{Z}(p_{-}),\pi_{Z}(p_{+})) > \theta
\]
then the length of $p$ is at least $f(R)$.

We say that $Z$ is \emph{superlinear-divergent} if it is $(f,\theta)$--divergent for some constant $\theta > 0$ and a superlinear function $f : \mathbb{R}_{+} \rightarrow \mathbb{R}_{+}$.
\end{definition}

The following lemma shows that the existence of a superlinear-divergent quasi-geodesic in a group $G$ is a quasi-isometry invariance.

\begin{lem}\label{lem:qiInvariance}
Let $X$ be a geodesic metric space containing a superlinear-divergent subset $Z$, and let $\phi:X\to Y$ be a quasi-isometry. Then $\phi(Z)$ is also superlinear-divergent.
\end{lem}
\begin{proof}
Let $Z \subset X$ be $(f,\theta)$--divergent with a coarsely Lipschitz projection $\pi_{Z}$.  Let $\phi:X\to Y$ be a $(q,Q)$--quasi-isometry. Then $\pi_Z$ pushes forward to a coarsely Lipschitz projection $\pi_{\phi(Z)} = \phi \circ \pi_Z \circ \phi^{-1}$.

Note that the pullback under $\phi$ of a continuous path in $Y$ may not be a continuous path in $X$. But by the taming quasi-geodesics lemma (Lemma III.H.1.11 of \cite{Bridson1999}), we can find a continuous path within the $(q+Q)$--neighborhood of $\phi^{-1}(p)$ with the same endpoints.  

Fix $R>0$. Suppose $p$ is a path in $Y$ outside of a $R$--neighborhood of $\phi(Z)$, and suppose the endpoints $p_-$ and $p_+$ satisfy
\begin{equation*}
    d(\pi_{\phi(Z)}(p_-),\pi_{\phi(Z)}(p_+))>\theta',
\end{equation*}
where $\theta' = q \theta + Q$. Then let $p'$ be a continuous path in the $(q+Q)$--neighborhood of $\phi^{-1}(p)$ with endpoints $p'_{-} \in \phi^{-1}(p_{-})$and $p'_{+} \in \phi^{-1}(p_+)$. It follows that $p'$ is outside of the $\left(\frac{R}{q}-q-2Q\right)$--neighborhood of $Z$. Moreover, the endpoints have projections bounded by
\begin{equation*}
    d_{Z}(\pi_{Z}(p'_-),\pi(p'_+))>\theta.
\end{equation*}
Superlinear divergence of $Z$ lets us conclude that 
\[
l_{X}(p') > f \left(\frac{R}{q}-q-2Q\right),
\]
 so $l_{Y}(p) > g(d)$ where 
\begin{equation*}
     g(x) = \frac{1}{q}f \left(\frac{x}{q}-q-2Q \right)-Q
\end{equation*}
is a superlinear function.
\end{proof}
\begin{cor}\label{cor:qiInvariance}
Suppose a finitely generated group $G$ contains a superlinear-divergent bi-infinite quasi-geodesic $\gamma: \R \rightarrow G$. Let $H$ be a finitely generated group quasi-isometric to $G$. Then $H$ also contains a superlinear-divergent bi-infinite quasi-geodesic.
\end{cor}

We now establish basic consequences of superlinear divergence of a geodesic. In part, superlinear-divergent geodesics are ``constricting'' (in the sense of \cite{arzhantseva2015growth} and \cite{sisto2018contracting}) up to a logarithmic error. This will be formulated more precisely in Lemma \ref{lem:quickLanding2}.

\begin{lem}\label{lem:halfOrTwice}
For each superlinear function $f$ and positive constants $A, B, K, \theta, q, Q$, there exists a constant $K_{0}>1$ such that the following holds.

Let $Z$ be an $(f, \theta)$--divergent subset of $X$ with respect to an $(A, B)$--coarsely Lipschitz projection $\pi_{Z}$, and let $\alpha : [0, M] \rightarrow X$ be a geodesic in $X$ such that 
\[
d\big(\pi_{Z}\alpha(0), \pi_{Z}\alpha(M)\big) \ge \theta
\qquad\text{and}\qquad 
d(\alpha(0), Z) > K_{0}.
\]
Then there exists $t \in [0,M]$ such that 
\[
d\big(\pi_{Z}\alpha(0), \pi_{Z}\alpha(t)\big) \le \theta+B,
\]
and either
\[
d(\alpha(t), Z) \ge K \cdot d(\alpha(0), Z) 
\qquad\text{or}\qquad
d(\alpha(t), Z) \le \frac 1K \cdot d(\alpha(0), Z).
\]
\end{lem}

\begin{proof}
Let $A,B$ be the coarsely Lipschitz constants of $\pi_Z$. Choose $K' >1$ large enough such that for all $t > K'$,
\[
\frac{f(t)}{t} \ge K(K+5B+\theta+1)(A+1).
\]
Let 
\[
\tau := \inf \big\{t \in [0,M]: d(\pi_{Z}\alpha(0), \pi_{Z}\alpha(t)) \ge \theta\big\}.
\]
By the $(A, B)$-coarse Lipschitzness of $\pi_{Z}$, we have
\[
d(\pi_{Z}\alpha(0), \pi_{Z}\alpha(t)) \le \theta+B
\]
for all $t \in [0, \tau]$. We now take $K_{0} = K'K$. For convenience, let $d_{t} := d(\alpha(t), Z)$ for each $t$. The desired conclusion holds if $d_{t} \le d_{0}/K = K'$ for some $t \in [0, \tau]$; suppose not. Under this assumption, we show that $d_\tau > Kd_0$. By superlinear-divergence of $Z$,
\[
l(\alpha([0, \tau])) 
\ge f\left( \frac{d_{0}}{K}\right) 
\ge K(K+5B+\theta+1)(A+1) \cdot \left(\frac{d_{0}}{K}\right)
\ge (K+5B+\theta+1)(A+1) d_{0}.
\]
Note also, since $\alpha$ is a geodesic,
\[\begin{aligned}
l(\alpha([0,\tau]))
&\le d(\alpha(0), \pi_{Z}(\alpha(0))) + d(\pi_{Z}(\alpha(0)), \pi_{Z}(\alpha(\tau))) + d(\pi_{Z}(\alpha(\tau)), \alpha(\tau))\\
&\le ((A+1) d_{0} + 2B) + (\theta+B) + [(A+1) d_{\tau} + 2B].
\end{aligned}
\]
Combining these, we have 
\begin{align*}
d_{\tau} &\ge \frac{1}{A+1}[(K+5B+\theta+1)(A+1) d_{0} -5B-(A+1)d_0 - \theta]\\
&\ge K d_{0} + (5B+ \theta) \left(d_0-\frac{1}{A+1} \right)\\
&\ge K d_{0},
\end{align*}
where the final inequality is due to $d_{0} \ge K_{0} = KK' > 1 > 1/(A+1)$.
\end{proof}

The following lemma is a technical calculation that will be used in the proof of Lemma \ref{lem:quickLanding2} to examine the behaviour of a sequence of points along a geodesic whose projections are making steady progress.
\begin{lem}\label{lem:3points}
Let $\pi_Z : X \rightarrow Z$ be an $(A, B)$--coarsely Lipschitz projection onto a subset $Z$ of $X$ and let $K>0$. Suppose $x, z \in X$, $y \in [x, z]$ satisty \[
\begin{aligned}
d\big(\pi_{Z}(x), \pi_{Z}(y)\big) &<K, \\
d\big(\pi_{Z}(y), \pi_{Z}(z)\big) &< K, \text{and} \\
d(x, Z),\, d(z, Z) &\le \frac{1}{8(A+1)} d(y, Z).
\end{aligned}
\]
Then we have $d(y, Z) \le 2K + 16B$.
\end{lem}

\begin{proof}
Suppose to the contrary that $d(y, Z) > 2K + 16B$. First, the assumption tells us that \[
(A+1) d(x, Z) + 2B \le \frac{1}{8} d(y, Z) + 2B \le \frac{1}{4} d(y, Z).
\] 
This forces \[
d(x, y) \ge d(y, \pi_{Z}(x)) - d(x, \pi_{Z}(x)) \ge d(y, Z) - (A+1)d(x, Z) - 2B \ge \frac{3}{4} d(y, Z)
\]
and similarly $d(y, z) \ge \frac{3}{4} d(y, Z)$, which leads to 
\[d(x, z) = d(x, y) + d(y, z) \ge  \frac{3}{2}d(y, Z).\] 
Meanwhile, note that \[
d(x, z) \le d(x, \pi_{Z}(x)) + d(\pi_{Z}(x), \pi_{Z}(z)) + d(\pi_{Z}(z), z) \le  \frac{1}{2}d(y, Z) + 2K.
\]
Hence, we have \[\frac{3}{2}d(y, Z) \le \frac{1}{2}d(y, Z) + 2K,\] which contradicts the assumption.
\end{proof}

The following is the main lemma. 

\begin{lem}\label{lem:quickLanding2}
Let $Z$ be an $(f, \theta)$--divergent subset of $X$. Then, for any $\delta >0$, there exists $K_1 > 0$ such that the following holds. For any $x, y \in X$, if 
\[
d(\pi_{Z}(x), \pi_{Z}(y)) > \delta (\log d(x, Z) + \log d(y, Z)) + K_{1},
\]
 then there exist a subsegment $[p_{x}, p_{y}]$ of $[x, y]$ and points $q_{x}, q_{y} \in Z$ such that: \begin{enumerate}
    \item $d(p_x, q_x), d(p_y, q_y) < K_1$
    \item $d(q_{x}, \pi_{Z}(x)) \le \delta \log d(x, Z) + K_{1}$,
    \item $d(q_{y}, \pi_{Z}(y)) \le \delta \log d(y, Z) + K_{1}$,
    \item the segment $[p_x, p_y]$ is in the $K_{1}$--neighbourhood of $Z$.
\end{enumerate}
\end{lem}
Roughly speaking, parts (1), (2) and (3) state that the geodesic $[x,y]$ will enter the 
$K_1$--neighbourhood of $Z$ exponentially quickly from both sides and part (4) states that it stays near $Z$ in the middle 
(See Figure~\ref{fig:quicklanding2}). 

\begin{figure}
    \centering
    \includegraphics[width = \linewidth]{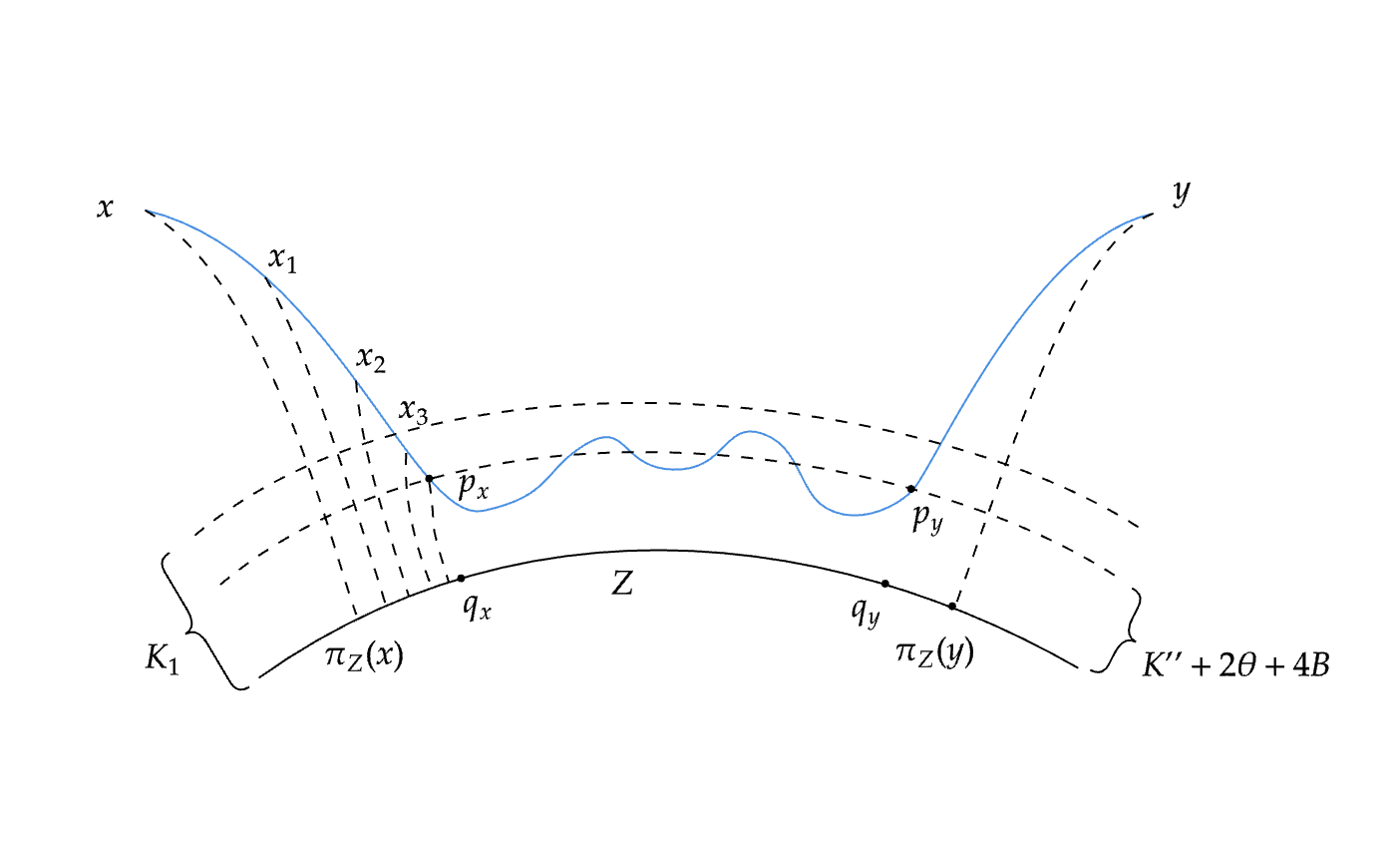}
    \caption{A geodesics whose endpoints project sufficiently far apart onto a superlinear-divergent set $Z$ must enter and exit a small neighborhood of $Z$ near the projections.}
    \label{fig:quicklanding2}
\end{figure}

\begin{proof}
Let $(A, B)$ be the coarsely Lipschitz constants for $\pi_{Z}$. Let $K' = 8(A+1) + \operatorname{exp}(\frac{\theta + B}{\delta})$, let $K'' = K_{0}(K')$ be as in Lemma \ref{lem:halfOrTwice}, and let \[
K_{1} = (2A+3)(K'' + 2\theta + 4B) + 5B + \theta + \log K'.
\]
If $[x, y]$ entirely lies in the $(K'' + 2\theta + 4B)$--neighborhood of $Z$, we can take $p_{x} = x$, $p_{y} = y$, $q_{x} = \pi_{Z}(x)$ and $q_{y} = \pi_{Z}(y)$.

If not, we analyze the subsegments of $[x,y]$ outside of the $(K'' + 2\theta + 4B)$--neighborhood of $Z$. Let $[x', y']$ be an arbitrary connected component of 
\[
[x, y] \setminus N_{K'' + 2\theta + 4B} (Z) := \big\{ p \in [x, y]: d(p, Z) \ge K'' + 2\theta + 4B\big\}.
\]
We will take a sequence of points $\{x_{i}\}_{i=0,\ldots,M}$ on $[x', y']$, associated with a sequence of real numbers $\{r_{i} := d(x_i,Z)\}_{i=0,\ldots,M}$ (Figure \ref{fig:quicklanding2}). We construct the sequence recursively. Start by choosing $x_{0} := x'$, then recursively choose $x_{i+1} \in [x_{i}, y']$ such that 
\[
d\big(\pi_{Z}(x_{i}), \,\pi_{Z}(x_{i+1})\big) \le \theta + B.
\]
and either
\[
r_{i+1} \ge K'r_{i} \qquad\text{or}\qquad r_{i+1} \le r_{i}/K'.
\]
Such $x_{i+1}$ must exist when $d(\pi_{Z}(x_{i}), \pi_{Z}(y')) \geq \theta + B$, due to Lemma \ref{lem:halfOrTwice}. The process terminates at step $M$ when 
\[
d(\pi_{Z}(x_M), \pi_{Z}(y')) \le \theta + B
\qquad\text{or}\qquad
r_{M} \le K''+ 2\theta + 4B.
\]
We first observe that, by Lemma \ref{lem:3points}, for any $i$, we cannot simultaneously have 
\[ 
r_{i} \ge K'r_{i-1} \qquad \text{and}\qquad r_i \geq K'r_{i+1}.
\]
Hence, the only possibilities for the sequence is either: \begin{enumerate}[label=(\roman*)]
\item $r_{i}$ keeps decreasing, 
\item $r_{i}$ keeps increasing, or
\item $r_{i}$ decreases at first and then keeps increasing.
\end{enumerate}
We will apply this observation in two cases depending on the endpoints of $[x',y']$.  

\textbf{Case 1.} One (or both) of the endpoints is $x$ or $y$. 

WLOG, consider the case $x' = x$. We will show that these segments enter the $K_1$--neighbourhood of $Z$ exponentially quickly. Then we will choose $p_x$ to be the entrance point. 

We first see that the sequence will not persist until $y$. Choose index $j$ such that
\[ r_{j} = \min_{i = 0, \ldots, M} r_{i}.\]
If the minimum satisfies
\[ r_{j} > K''+2\theta + 4B, \]
 then the sequence persists until $y$. In this case, the sequence decreases until the minimum, then keeps increasing until the end. It terminates when
\[ 
d(\pi_{Z}(x_{M}), \pi_{Z}(y)) \le \theta +  B,
\]
and the index satisfies
\[
M \ge \frac{1}{\theta + B}d(\pi_{Z}(x), \pi_{Z}(y)) - 1.
\]
Moreover,
\[
 \frac{r_{M}}{r_{j}} \cdot \frac{r_{0}}{r_{j}} \ge K'^{M}.
\]
Combining the three inequalities above we have
\[
\log d(y, Z) + \log d(x, Z) - 2 \log r_{j} \ge d(\pi_{Z} (x), \pi_{Z}(y)) \cdot \frac{\log K_{2} }{\theta+B} - \log K'.
\]
Hence, \[
d\big(\pi_{Z}(x), \,\pi_{Z}(y)\big) \le \delta \big(\log d(x, Z) + \log d(y, Z)\big) + \log K',
\]
a contradiction. Hence, the sequence $\{r_{i}\}_{i}$ keeps decreasing as in Figure \ref{fig:quicklanding2}, and it terminates when 
\[
r_{M} \le K'' + 2\theta + 4B.
\]
Choose $p_x = x_{M}$ and take $q_{x} \in Z$ such that 
\[
d(p_{x}, q_{x}) \le K'' + 2\theta + 4B.
\]
This choice of $p_x$ and $q_x$ guarantees that 
\begin{align*}
d(\pi_{Z}(p_{x}), q_{x}) &\le d(\pi_{Z}(p_{x}), \pi_{Z}(q_{x})) + d(\pi_{Z}(q_{x}), q_{x}) \\
&\le \big(A d(p_{x}, q_{x}) + B \big) + B \\
&\le K_{1}.
\end{align*}
Moreover, we have 
\[
K'^{M} \le r_{0}/r_{M},
\]
which implies
\[M \le \frac{\log r_{0} - \log r_{M}}{\log K'}.
\]
So 
\[
d(\pi_{Z}(p_x), \pi_{Z}(x)) \le (\theta  + B) M \le \delta \log d(x, Z),
\] 
and consequently
\[
d(q_{x}, \pi_{Z}(x)) \le \delta \log d(x, Z) + K_{1}
\]
as desired. We may apply the same argument to choose $p_y \in [x,y]$  and $q_{y} \in Z$  such that 
\[ d(p_{y}, q_{y}) \le K_{1} \qquad \text{and} \qquad d(q_{y}, \pi_{Z}(y)) \le \delta \log d(y, Z) + K_{1}.\]

\textbf{Case 2.} The endpoints $x'$ and $y'$ both belong to the closure of $N_{K'' + 2\theta + 4B}(Z)$. 

These are segments between our choice of $p_x$ and $p_y$. We show that they are within the $K_1$--neighbourhood of $Z$.

In this case,
\[
d(x', Z) = d(y', Z) = K'' + 2\theta + 4B \le d(p, Z) \quad \big(\forall p \in [x', y']\big).
\]
Observe that $r_{i}$ cannot decrease as first since $[x',y']$ lies outside the $(K''+2\theta+4B)$--neighbourhood of $Z$. But $r_{i}$ also cannot keep increasing, because $d(x', Z) = d(y', Z)$. So the process must stop at the very beginning, that is,
\[
M=0 \qquad \text{and} \qquad d(\pi_{Z}(x'), \pi_{Z}(y')) \le \theta + B.
\] 
Then we have 
\begin{align*}
d(x', y') &\le d\big(x', \pi_{Z}(x')\big) + (\theta + B) + d\big(\pi_{Z}(y'), y'\big) \\
&\leq \bigl((A+1)d(x',Z)+2B \bigr) + (\theta + B) + \bigl((A+1)d(y',Z)+2B \bigr) \\
& = 2(A+1)(K'' + 2\theta + 4B) + 4B + (\theta + B).
\end{align*}
From this, we deduce that $[x', y']$ lies in the $K_{1}$--neighborhood of $Z$.
\end{proof}
The next lemma helps us strengthen Lemma \ref{lem:quickLanding2} to a statement about Hausdorff distance. 
\begin{lem}\label{lem:fellowTravelVAR}
Let $K, M, M'$ be positive constants and $\alpha: [0, M] \rightarrow X$ and $\beta : [0, M'] \rightarrow X$ be $(q, Q)$--quasi-geodesics. Suppose that $\alpha$ is contained in a $K$--neighborhood of $\beta$ and \[
d(\alpha(0), \beta(m))< K, \quad d(\alpha(M), \beta(n)) < K
\]hold for some $0 \le m < n \le M'$. Then we have\[
    d_{\rm Haus}(\alpha, \beta|_{[m, n]}) \leq K + Q+6q^{6} Q + 2Kq^{5}.
    \]
\end{lem}

\begin{proof}
Let us define a map $h$ from $[0, M]$ to $[0, M']$. For each $t \in [0, M]$ let $h(t) \in [0, M']$ be such that $d(\alpha(t), \beta(h(t))) \le K$. Without loss of generality, set $s_{0} := m$ and $s_{M} := n$. This map is well-defined, and is a $(q^{2}, K +  2qQ)$--quasi-isometric embedding of $[0, M]$ into $\mathbb{R}$. Indeed, note that
\begin{align*}
|h(t) - h(t')| &\le q d(\beta(h(t)), \beta(h(t'))) + qQ \\
&\le qd(\alpha(t), \alpha(t')) + 2K + qQ \\
&\le q^{2} |t-t'| +  K + 2qQ
\end{align*}
and \begin{align*}
	|t - t'| &\le q d(\alpha(t), \alpha(t')) + qQ \\
	&\le qd(\beta(h(t)), \beta(h(t'))) + 2K + qQ \\
	&\le q^{2} |h(t)-h(t')| +  K + 2qQ.
\end{align*}

From the very definition, it is clear that $\alpha$ and $\beta(h([0, M]))$ are within Hausdorff distance $K$. Next, as $h$ is a QI-embedding of $[0, M]$ into $\mathbb{R}$ that sends $0$ and $M$ to $m$ and $n$, its image $h([0, M])$ is $2qQ$-connected and $h([0, M])$ is contained in \[
[s - 6q^{5}Q - 2K q^{4}, t + 6q^{5}Q + 2K q^{4}].
\]
In particular, $h([0, M])$ and $[m, n]$ are within Hausdorff distance $6q^{5}Q + 2K q^{4}$. By applying $\beta$, we deduce that $\beta(h([0, M]))$ and $\beta|_{[m, n]}$ are within Hausdorff distance $6q^{6} Q + 2Kq^{5} + Q$. Combining all these, we conclude that \[
    d_{\rm Haus}(\alpha, \beta|_{[m, n]}) \leq K + Q+6q^{6} Q + 2Kq^{5}. \qedhere
    \]
\end{proof}

\begin{cor}\label{cor:fellowTravel}
In the setting of Lemma \ref{lem:quickLanding2}, assume that $Z$ is a $(q,Q)$--quasi-geodesic. Then for some constant $K_2$ depending on $f,\theta,q,Q,\delta$,
    \[
    d_{\rm Haus}([p_x,p_y],[q_x,q_y]|_{Z}) \leq K_2.
    \]
\end{cor}

As another corollary of Lemma \ref{lem:quickLanding2}, we can replace a superlinear-divergent quasigeodesic on $X$ with a superlinear-divergent geodesic.

\begin{cor}\label{cor:superDivGeod}
Let $\gamma$ be a bi-infinite $(f, \theta)$--divergent quasigeodesic on a proper space $X$. Then there exists a bi-infinite $(f',\theta')$--divergent geodesic $\gamma'$ such that $d_{Haus}(\gamma, \gamma')$ is finite. Specifically, $f'(x) = f(x-C)$, $\theta' = \theta + 2C$ where $C$ is the constant Hausdorff distance between $\gamma$ and $\gamma'$.
\end{cor}

\begin{proof}
Let $\gamma: \Z \rightarrow X$ be an $(f, \theta)$--divergent $(q, Q)$--quasigeodesic on $X$. Let $K_{1}$ be the constant given by Lemma \ref{lem:quickLanding2} for $Z = \gamma$ and $\delta = 0$. For each sufficiently large $n$, we note that \[
d(\pi_{\gamma}(\gamma(n)), \pi_{\gamma}(\gamma(-n))) \ge d(\gamma(n), \gamma(-n)) - 2B > \frac{2n}{q} - Q - 2B > K_{1}.
\]
Lemma \ref{lem:quickLanding2} tells us that there exists a subsegment $[p_{-n}, p_{n}]$ of $[\gamma(-n), \gamma(n)]$ and $j_{-n}, j_{n} \in \Z$ such that \[
d(p_{-n}, \gamma(j_{-n})) \le K_{1}, \quad d(p_{n}, \gamma(j_{n})) \le K_{1},
\] \[
d(\gamma(j_{-n}), \gamma(-n)) \le d(\gamma(j_{-n}), \pi_{\gamma}(\gamma(-n))) + d(\pi_{\gamma}(\gamma(-n)), \gamma(-n)) \le K_{1}+B,
\]\[
d(\gamma(j_{n}), \gamma(n)) \le d(\gamma(j_{n}), \pi_{\gamma}(\gamma(n))) + d(\pi_{\gamma}(\gamma(n)), \gamma(n)) \le K_{1}+B,
\]
and such that $[p_{-n}, p_{n}] \subseteq N_{K_{1}} (\gamma)$. By Lemma \ref{lem:fellowTravelVAR}, $[p_{-n}, p_{n}]$ and $\gamma([j_{-n}, j_{n}])$ are within Hausdorff distance $K_{1} + Q+6q^{6} Q + 2Kq^{5}$. For simplicity, let $C = K_{1} + Q+6q^{6} Q + 2Kq^{5}$. Note also that \[
j_{-n} < -n + q(K_{1} + B) + Q < 0 < n - q(K_{1} + B) - Q < j_{n}
\]
for large enough $n$. In conclusion, $[p_{-n}, p_{n}]$ contains a point $p$ that is $C$--close to $\gamma(0)$. Moreover, the distance \[d(\gamma(0), p_{n}) > d(\gamma(0), \gamma(j_{n})) - 2K_{1} - B\] grows linearly, and likewise so does $d(\gamma(0), p_{-n})$. Using the properness of $X$ and Arzela-Ascoli, we conclude that the sequence $\{[p_{-n}, p_{n}]\}_{n>1}$ converges to a bi-infinite geodesic $\gamma'$, within a $K_{1}$--neighborhood of $\gamma$. By Lemma \ref{lem:fellowTravelVAR} again, we have $d_{\rm Haus}(\gamma, \gamma') \le C$.

It remains to declare a coarsely Lipschitz projection $\pi_{\gamma'}$ onto $\gamma'$ and show that $\gamma'$ is $(f', \theta')$--divergent with respect to $\pi_{\gamma'}$. Since $d_{\rm Haus}(\gamma,\gamma') \le C$, we can define $\pi_{\gamma'}(z)$ to be a point on $\gamma'$ such that \[d(\pi_{\gamma'}(z), \pi_{\gamma}(z)) < C. \] 
Any path $p$ outside of the $R$--neighborhood of $\gamma'$ is outside of the $(R-C)$--neighborhood of $\gamma$. Moreover, if the endpoints $p_-$ and $ p_+$ of $p$ satisfy that
\[
d\bigl(\pi_{\gamma'}(p_-),\pi_{\gamma'}(p_+)\bigr) > \theta + 2C ,
\]
then by the construction of $\pi_{\gamma'}$,
\[
d\bigl(\pi_{\gamma}(p_-),\pi_{\gamma}(p_+)\bigr) > \theta.
\]
Superlinear divergence of $\gamma$ implies that the length of $p$ is at least $f(R-2C)$. This concludes the proof.
\end{proof}

\subsection{Convention}
From now on, we fix a finitely generated group $G$ with exponential growth which contains a superlinear-divergent bi-infinite geodesic $\gamma : \mathbb{R} \rightarrow G$: this is a QI-invariant property thanks to Corollary \ref{cor:qiInvariance} and Corollary \ref{cor:superDivGeod}.

\section{Alignment}\label{section:alignment}

In this section, we define the alignment of sequences of (subsegments of) superlinear-divergent geodeiscs. The key lemma is Lemma \ref{prop:induction}, which promotes alignment between consecutive pairs to global alignment of a sequence.

\begin{definition}
Given paths $\gamma_{1}, \ldots, \gamma_{N} : \Z \rightarrow G$, integers $m_{i} \leq n_{i}$ and subpaths $\gamma_{i}' := \gamma_{i}([m_{i}, n_{i}])$, we say that $(\gamma_{1}', \ldots, \gamma_{N}')$ is $K$--aligned if: \begin{enumerate}
\item $\pi_{\gamma_{i}}(\gamma_{i-1}')$ lies in $\gamma_{i}\big((-\infty, m_{i} + K]\big)$, and
\item $\pi_{\gamma_{i}}(\gamma_{i+1}')$ lies in $\gamma_{i}\big([n_{i} - K, +\infty)\big)$.
\end{enumerate}
\end{definition}
Note that $\gamma_i$ can be a single point. We will construct linkage words using $K$--aligned paths, starting with the following lemma.

\begin{lem}\label{lem:3segments}
Given a superlinear function $f$, positive constants $\theta, A, B$ and $0<\epsilon, \eta < 0.1$,
there exists a constant $K_{3} = K_{3}(f, \theta, A, B, \epsilon, \eta)$ such that the following holds.

For $i = 1,2$, let $\gamma_i$ be an $(f,\theta)$--divergent geodesic with respect to a $(A,B)$--coarsely Lipschitz projection $\pi_{\gamma_i} : X \to \gamma_i$, and let $\gamma_{i}' = \gamma_{i}([m_{i}, n_{i}])$ be a subpath of $\gamma_{i}$. Let $z \in X$, and let $D > K_3$ be a constant such that: \begin{enumerate}
\item$\diam(\gamma_{1}' \cup \gamma_{2}' \cup z) \leq D$ ;
\item $ |n_{2} - m_{2}| \ge \epsilon \log D$;
\item $(\gamma_1', \gamma_2')$ is $(\eta \epsilon \log D)$--aligned and $(\gamma_2', z)$ is $(2\eta \epsilon \log D)$--aligned.
\end{enumerate}
Then $(\gamma_{1}', z)$ is $(2\eta \epsilon \log D)$--aligned.
\end{lem}
\begin{figure}
    \centering
    \includegraphics[width = \textwidth]{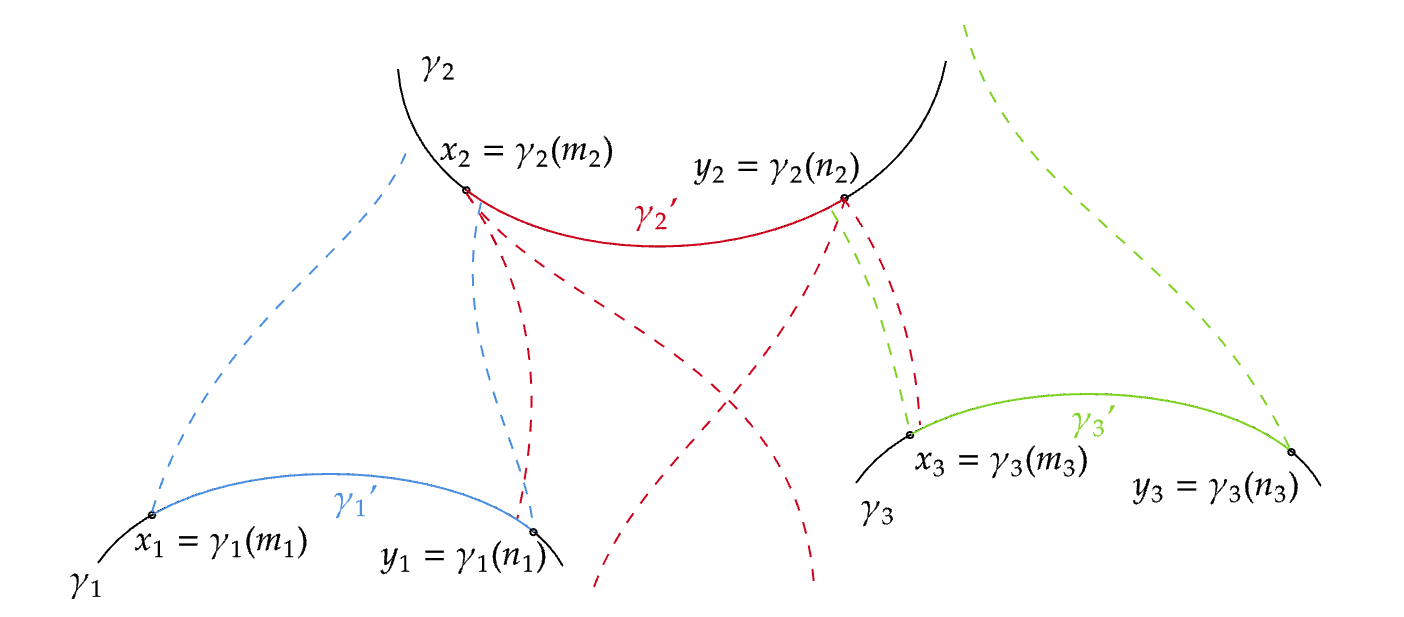}
    \caption{The segments satisfy $(\gamma_1', \gamma_2')$ is $(\eta \epsilon \log n)$--aligned and $(\gamma_2', \gamma_3')$ is $(2\eta \epsilon \log n)$--aligned.}
    \label{fig:aligned}
\end{figure}
\begin{figure}
    \centering
    \includegraphics[width = \textwidth]{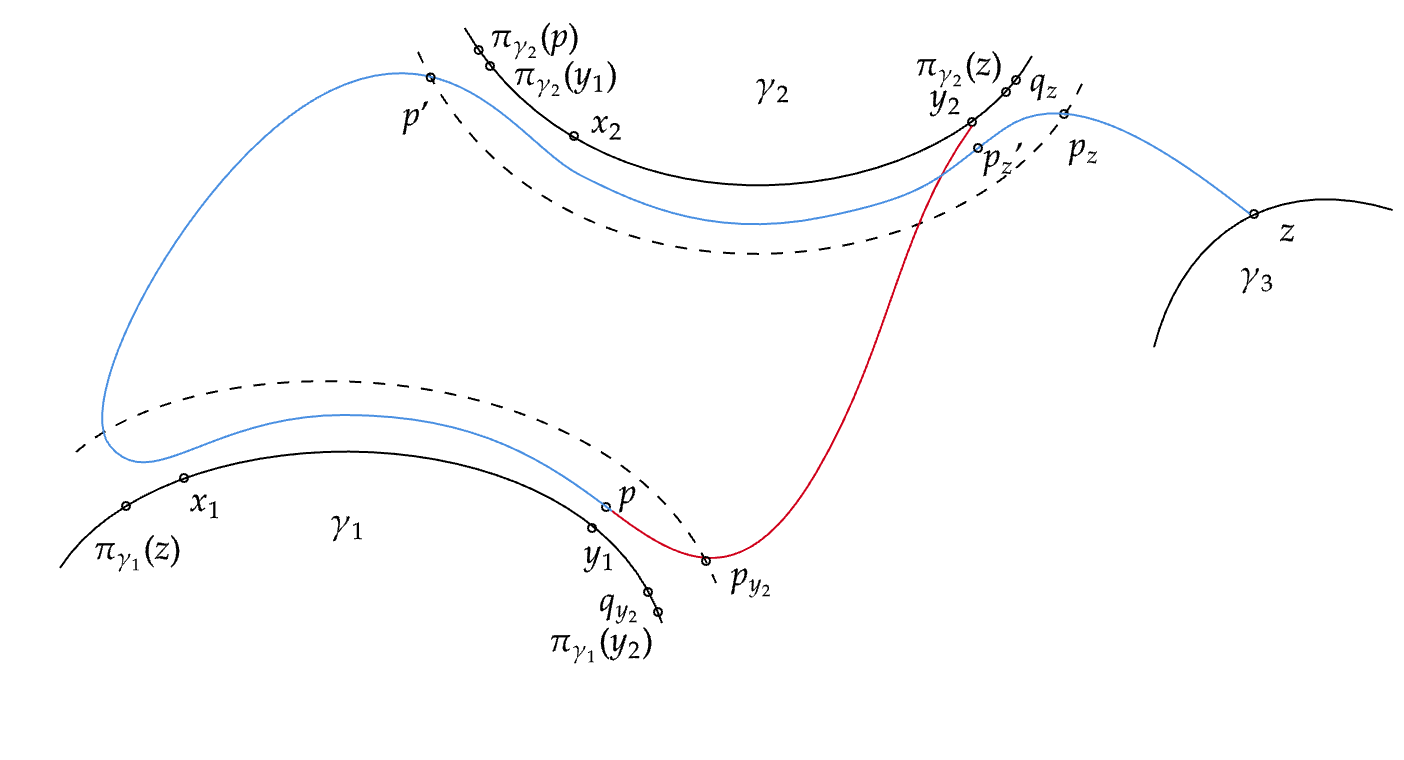}
    \caption{If $\pi_{\gamma_1}(z)$ lies in $ \gamma_1((-\infty, n_1 - 2 \eta \epsilon \log(n)))$, then the geodesic $[y_2,z]$ would fellow travel $\gamma_1'$ then $\gamma_2'$, causing a contradiction.}
    \label{fig:3segments}
\end{figure}
\begin{proof}
We will assume that $D$ is much larger than the constants $K_{1}$ and $K_{2}$ that appears during the argument. For $i=1,2$, denote $x_i = \gamma(m_i)$ and $y_i = \gamma(n_i)$. Suppose for contradiction that $\pi_{\gamma_1}(z)$ lies in $ \gamma_1\big((-\infty, n_1 - 2 \eta \epsilon \log D)\big)$ as in Figure \ref{fig:3segments}. This implies that \[
\begin{aligned}
d(\pi_{\gamma_{1}}(y_{2}), \pi_{\gamma_{1}}(z)) &\ge \eta \epsilon \log D > \frac{\eta \epsilon}{3}( \log d(y_{2}, \gamma_{1}) + \log d(z, \gamma_{1})) + K_1,
\end{aligned}
\]
where $K_{1}$ is the constant given in Lemma \ref{lem:quickLanding2} taking $\delta = \eta \epsilon / 3$. 
By Lemma \ref{lem:quickLanding2}, there exist a subsegment $[p_{1}, p_{2}]$ of $[z, y_{2}]$ and time parameters $s$, $t$ of $\gamma_1$ such that $d(p_{1}, \gamma_{1}(s)) < K_1$, $d(p_{2}, \gamma_{1}(t)) < K_1$ and \[\begin{aligned}
d(\gamma_{1}(s), \pi_{\gamma_{1}}(z)) &< \frac{\eta \epsilon}{3} \log d(z, \gamma_{1}) + K_{1} ,\\
d(\gamma_{1}(t), \pi_{\gamma_{1}}(y_{2})) &< \frac{\eta \epsilon}{3} \log d(y_{2}, \gamma_{1}) + K_{1}.
\end{aligned}
\]

In particular, we have \[\begin{aligned}
s &< \gamma_{1}^{-1} (\pi_{\gamma_{1}}(z)) +  \left( \frac{\eta \epsilon}{3} \log d(y_{2}, \gamma_{1}) + K_{1}\right) \\
&\le n_{1} - \frac{5}{3}\eta \epsilon \log D.
\end{aligned}
\]
A similar calculation shows that $t > n_{1} - \frac{4}{3}\eta \epsilon \log D$. Now let $K_2$ be the constant in Corollary \ref{cor:fellowTravel} so that $\gamma_{1}([s, t])$ and $[p_{1}, p_{2}]$ are within Hausdorff distance $K_{2}$ of each other. In particular, for $p' := \gamma_{1}(n_{1} - 1.5 \eta \epsilon \log D) \in \gamma_{1}([s, t])$, we have a point $p \in [p_{1}, p_{2}] \subseteq [z, y_{2}]$ such that $d(p, p') \le K_{2}$.

Let us now investigate the relationship between $[p, y_{2}]$ and $\gamma_{2}'$. First, the coarse Lipschitzness of $\pi_{\gamma_{2}}$ tells us that \[\begin{aligned}
d(\pi_{\gamma_{2}}(p), \pi_{\gamma_{2}}(y_{2})) &\ge d(\pi_{\gamma_{2}}(p'), y_{2}) - d(\pi_{\gamma_{2}}(p'), \pi_{\gamma_{2}}(p)) - d(y_{2}, \pi_{\gamma_{2}}(y_{2})) \\
&\ge d(\pi_{\gamma_{2}}(p'), y_{2}) - AK_{2}- 2B.
\end{aligned}
\]
Since $\pi_{\gamma_{2}}(p') \in \pi_{\gamma_{2}}(\gamma_{1}')$ is contained in $\gamma_{2}\big( (-\infty, m_{2} + \eta \epsilon \log D) \big)$, we deduce that\[\begin{aligned}
d(\pi_{\gamma_{2}}(p), \pi_{\gamma_{2}}(y_{2})) > (n_{2} - m_{2}) - \eta \epsilon \log D - AK_{2} - 2B > \frac{\eta \epsilon}{3}(\log d(p, \gamma_{2})) + K_{1}.
\end{aligned}
\]
Again, by Lemma \ref{lem:quickLanding2} there exist a subsegment $[p_{1}', p_{2}'] \subseteq [p,z]$ and time parameters $s', t'$ of $\gamma_2$  with $d(p_{1}', \gamma_{2}(s')) < K_{1} $, $d(p_{2}', \gamma_{2}(t')) < K_{1}$ and \[\begin{aligned}
d(\gamma_{2}(s'), \pi_{\gamma_{2}}(p)) &< \frac{\eta \epsilon}{3q} \log d(p, \gamma_{2}) + K_{1},\\
d(\gamma_{2}(t'), \pi_{\gamma_{2}}(y_{2})) &<  K_{1}.
\end{aligned}
\]
This means that \[\begin{aligned}
d(p_{1}', p_{2}') &\ge d(\pi_{\gamma_{2}}(p),\pi_{\gamma_{2}}(y_{2})) 
- d(\gamma_{2}(t'), \pi_{\gamma_{2}}(y_{2}))
-d(\gamma_{2}(s'), \pi_{\gamma_{2}}(p))\\
&\qquad -d(p_{1}', \gamma_{2}(s'))
-d(p_{2}', \gamma_{2}(t')) \\
&\ge  \frac{(n_{2} - m_{2}) - \eta \epsilon \log D}{q} - Q - \frac{1}{3}\eta \epsilon \log D - 4K_{1} \\
&\ge \frac{2}{3}\eta \epsilon \log D.
\end{aligned}
\]
Hence, $[y_{2}, p_{2}']$ is longer than $\frac{2}{3} \eta \epsilon \log D$. But on the other hand,
\[d(p_{2}', y_{2}) \le d(p_{2}', \gamma_{2}(t')) + d(\gamma_{2}(t'), \pi_{\gamma_{2}}(y_{2})) + d(y_{2}, \pi_{\gamma_{2}}(y_{2})) \le 2K_{1}+ B.\] This is a contradiction for sufficiently large $D$.
\end{proof}

\begin{prop}\label{prop:induction}
Let $f$ be a superlinear function, $\theta, A, B > 0$, $0< \epsilon, \eta < 0.1$ and let $K_{3}$ be the constant given in Lemma \ref{lem:3segments}. Let $x, y \in X$, and for $i=1, \ldots, N$, $\gamma_{i}$ be an $(f, \theta)$--divergent geodesic with respect to a $(A, B)$--coarse-Lipschitz projection and let $\gamma_{i}' = \gamma_{i}([m_{i}, n_{i}])$ be a subpath of $\gamma_{i}$. Let $D > K_3$ be a constant such that:
\begin{enumerate}
\item $\diam(\gamma_{1}' \cup \ldots \cup \gamma_{N}') \leq D$;
\item $ |n_{i} - m_{i}| \ge \epsilon \log D$ for each $i$, and 
\item $(x, \gamma_{1}', \ldots, \gamma_{N}', y)$ is $(\eta \epsilon \log D)$--aligned.
\end{enumerate}
Then for each $i$, $(x, \gamma_{i}', y)$ is $(2\eta \epsilon \log D)$--aligned.

\end{prop}
\begin{proof}
This follows inductively from lemma \ref{lem:3segments}. Fixing $i<j$, we show that 
\begin{equation*}
    \pi_{\gamma_i}(\gamma_j') \in \gamma_i((-\infty, m_i+2\eta\epsilon\log D]).
\end{equation*}
If $i = j-1$, immediately by assumption we have 
\begin{equation*}
    \pi_{\gamma_i}(\gamma_j')
    \in \gamma_i((-\infty, m_i+\eta\epsilon\log D])
    \subset \gamma_i((-\infty, m_i+2\eta\epsilon\log D]).
\end{equation*}
Now assuming 
\[
\pi_{\gamma_{i+1}}(\gamma_j) \in \gamma_{i+1}((-\infty, m_{i+1}+2\eta\epsilon\log D]),
\]
since $(\gamma _i,\gamma_{i+1})$ are $(\eta\epsilon \log D)$--aligned, the triple $(\gamma _i,\gamma_{i+1}, \gamma_j)$ satisfies the assumptions in lemma \ref{lem:3segments}. We conclude that 
\[
\pi_{\gamma_{i}}(\gamma_j) \in \gamma_i((-\infty, m_i+2\eta\epsilon\log D]).
\]
Applying the same argument to $\pi_{\gamma_{j}}(\gamma_i)$, $\pi_{\gamma_{j}}(\gamma_k)$, and $\pi_{\gamma_{k}}(\gamma_j)$ shows that
\begin{align*}
\pi_{\gamma_{j}}(\gamma_i) &\in \gamma_j([n_j-2\eta\epsilon\log D, +\infty))\\
\pi_{\gamma_{j}}(\gamma_k) &\in \gamma_j((-\infty, m_j+2\eta\epsilon\log D]) \text{, and}\\
\pi_{\gamma_{k}}(\gamma_j) &\in \gamma_k([n_k-2\eta\epsilon\log D,+\infty)). \qedhere
\end{align*}
\end{proof}

\begin{lem}\label{lem:2segment}
Given a superlinear function $f$, positive constants $\theta, A, B$ and $0<\epsilon, \eta < 0.1$,
there exists constants $K_{4} = K_{4}(f, \theta, A, B, \epsilon, \eta)$ and $C = C(A)$ such that the following holds.

Let $\alpha$ and $\beta$ be $ (f, \theta)$--divergent geodesics with respect to $(A, B)$--coarsely Lipschitz projections. Let $\alpha'$ and $\beta'$ be their subsegments with beginning points $x_1$ and $x_2$, respectively, such that: \begin{enumerate}
    \item $D := \diam(\alpha'\cup \beta') \ge K_{4}$;
    \item $\diam(\alpha') \geq \epsilon \log D$, and
    \item $(\alpha', x_2)$ and $(x_1, \beta')$ are $\eta \epsilon\log D$--aligned.
\end{enumerate}
Then $(\alpha', \beta')$ is $(C \eta\epsilon \log D)$--aligned.
\end{lem}
\begin{proof}
Let $\alpha'= \alpha([m_{1}, n_{1}])$ and $\beta'= \beta([m_{2}, n_{2}])$. Denote $x_1 = \alpha (m_1)$, $y_1 = \alpha (n_1)$, $x_2 = \beta (m_2)$, $y_2 = \beta (n_2)$. Let $C' = 16(A+1)+1$ and $C = (C')^2 + 2$. We first show that 
\[
\pi_{\beta}(\alpha') 
\subset \beta((-\infty, m_2 + C'\epsilon \log D]) 
\subset  \beta((-\infty, m_2 + C\epsilon \log D]).
\]
Suppose to the contrary that for some point $a \in \alpha'$, the projection
\[
\pi_\beta(a) \in \beta([m_2 + C'\epsilon \log D , +\infty)).
\]
Then we have
\begin{align*}
    d(\pi_\beta (x_1), \pi_\beta (a)) 
    & \geq (C'-1) \eta \epsilon \log D\\
    & \geq \frac{1}{16A}(C'-1) \eta \epsilon (\log d(x_1, \beta) + \log d(a,\beta)) + K_1,
\end{align*}
where $K_1 >0$ is the constant as in Lemma \ref{lem:quickLanding2} taking $\delta = \frac{1}{16A}(C'-1)\eta \epsilon$. Then there exists a subsegment $[p_{x_1},p_a]|_\alpha \subset [x_1,a]|_\alpha \subset [x_{1}, y_{1}]|_\alpha$, and points $q_{x_1}, q_a$ on $\beta$ such that 
\begin{align*}
d(p_{x_1}, q_{x_1}), d(p_a, q_a) &< K_1\\
d(q_{x_1}, \pi_{\beta}(x_1)) &\le \left(\frac{1}{16A}(C'-1)\eta \epsilon\right) \log d(x_1, \beta) + K_{1}\\
d(q_{a}, \pi_{\beta}(a)) &\le \left(\frac{1}{16A}(C'-1)\eta \epsilon\right) \log d(a, \beta) + K_{1}.
\end{align*}
Then by Corollary \ref{cor:fellowTravel}, there is a point $p'_{x_1} \in [p_{x_1},p_{a}]|_{\alpha}$ close to $x_2$. The point $p'_{x_1}$ is chosen to be $p_{x_1}$ if $q_{x_1} \in \beta((m_2,\infty))$, or the point where the Hausdorff distance $K_2$ is attained if $q_{x_1} \in \beta((-\infty, m_2])$. The distance is bounded by
\begin{align*}
    d(x_2,p'_{x_1}) 
    &\leq  \max \left(\left(\frac{1}{16A}(C'-1)\eta \epsilon\right) \log d(x_1, \beta) + 2K_{1},K_2 \right)\\
    &\leq \left(\frac{1}{16A}(C'-1) \right)\eta\epsilon \log D + O(1)\\
    &= \left(\frac{A+1}{A} \right)\eta\epsilon \log D + O(1),
\end{align*}
where $K_2$ is the constant in Corollary \ref{cor:fellowTravel}, and $O(1)$ is the implied constant. Projecting to $\alpha$ gives that
\begin{align*}
    d(\pi_\alpha(x_2),p'_{x_1}) \leq  
    d(\pi_\alpha(x_2),\pi_\alpha(p'_{x_1})) + B 
    \leq (A+1)q\eta\epsilon \log D + O(1).
\end{align*}
On the other hand, since $(\alpha', x_{2})$ is $(\eta\epsilon\log D)$--aligned,
\begin{align*}
    d(\pi_\alpha(x_2),p'_{x_1})
    &\geq d(y_1,p'_{x_1}) - \eta\epsilon\log D \\
    &\geq d(p_{a},p'_{x_1}) - \eta\epsilon\log D\\
    &\geq d(x_2,\pi_\beta(a)) - d(x_2,p'_{x_1}) - d(\pi_\beta(a), p_{a})- \eta\epsilon\log D\\
    & \geq \left( \frac{1}{q} C'\eta \epsilon \log D \right) - 2 \left( \frac{C'-1}{16A} \eta \epsilon \log D \right)- \eta\epsilon\log D - O(1)\\
    & \geq  \bigl(14(A+1)-1 \bigr) \eta \epsilon \log D - O(1)
\end{align*}
contradicting the previous inequality when $D$ is sufficiently large.

We now show that 
\[
\pi_{\alpha}(\beta') 
\subset  \alpha((n_1 - C\epsilon \log D,\infty)).
\]
Suppose the contrary that for some point $b \in \beta'$ the projection
\[
\pi_{\alpha}(b) 
\in  \alpha((-\infty, n_1 - C\epsilon \log D)).
\]
We will discuss in two cases. If
\[
\pi_{\alpha}(b) 
\in  \alpha((m_1, n_1 - C\epsilon \log D)) \subset \alpha',
\]
then the previous calculation shows that 
\[
\pi_\beta(\pi_{\alpha}(b)) \in \beta ((-\infty, m_2+C'\eta \epsilon \log D]).
\]
This shows that $(\pi_\alpha(b),[x_2,b]|_\beta, )$ and $(b,[\pi_\alpha(b), y_1]|\alpha)$ are $(C'\eta\epsilon \log D)$--aligned. Moreover, $\diam ([x_2,b]|_\beta \cup [\pi_\alpha(b), y_1]|_\alpha) < D$. So the exact same calculation as before shows that 
\[
\pi_\alpha ([x_2,b]|_\beta) 
\subset \alpha \bigl((- \infty, \pi_\alpha(b)+C'^2 \epsilon \log D )\bigr)
\subset \alpha \bigl((- \infty, n_1-2 \epsilon \log D) \bigr).
\]
This contradicts that 
\[
\pi_\alpha (x_2) \in \alpha \bigl((n_1 - \eta\epsilon\log D , \infty)\bigr).
\]
The remainder case is when $\pi_{\alpha}(b) \in \alpha \bigl((-\infty, m_1)\bigr)$. We will show that this is impossible assuming $\eta < \min \left(\frac{1}{q+2q^2} , \frac{A+2}{A+q+2}\right)$ and $\alpha'$ is long. In this case,
\begin{align*}
    d(\pi_{\alpha}(b),\pi_{\alpha}(x_2)) & \geq \frac{1}{q}(1-\eta)\epsilon \log D\\
    & \geq \frac{1}{(2+A)} \eta \epsilon \log d(b,\alpha) + \log d(x_2,\alpha) - K_1,
\end{align*}
where $K_1$ is the constant in Lemma \ref{lem:quickLanding2} choosing $\delta = \frac{1}{(2+A)} \eta \epsilon$. Then by Lemma \ref{lem:quickLanding2}, there are points $p_{x_2},p_b \in [x_2,b]$ such that
\begin{align*}
    d(p_{x_2}, y_1) & \leq  \frac{1}{2+A} \eta \epsilon \log D +K_1, \text{and}\\
    d(p_b, x_1) &\leq \frac{1}{2+A} \eta \epsilon \log D +K_1.
\end{align*}
Then
\[
    d(\pi_\beta (x_1), p_b) \leq \frac{A}{2+A} \eta \epsilon \log D +O(1).
\]
But on the other hand, $\pi_\beta(x_1) \in \beta((-\infty,m_2 + \eta \epsilon \log D])$ implies that
\begin{align*}
     d(\pi_\beta (x_1), p_b)
     & \geq d(x_2,p_b) - \eta \epsilon \log D \\
     & \geq d(p_{x_2},p_b) - \eta \epsilon \log D \\
     & \geq d(x_1,y_1) - d(x_1,p_b) - d(y_1,p_{x_2}) \eta \epsilon \log D \\
     & \geq \epsilon\log D - \frac{2}{2+A}\eta \epsilon \log D - \eta \epsilon \log D - O(1)\\
     &> \frac{A}{2+A} \eta \epsilon \log n +O(1).
\end{align*}
The last step is due to $\eta < 1/3$. This is a contradiction. 
\end{proof} 

We now construct linkage words. These play the role of Schottky sets in \cite{boulanger2022large, gouezel2022exponential} We use the notation $\mathcal{B}(g, R) := \{h \in G : d(g, h) \le R\}$ to mean the ball of radius $R$ around $g$, and $\mathcal{S}(g, R) := \{h \in G : d(g, h) = R\}$ to mean the sphere of radius $R$ around $g$.

\begin{lem}\label{lem:linkage}
Let $\gamma: \R \rightarrow G$ be a $(f, \theta)$--divergent quasi-geodesic and let $\epsilon>0$. For $K$ sufficiently large, the following holds. For each $m\in \Z$, there exists a subset $S \subseteq G$ with 100 elements such that for each pair of distinct elements $a, b \in S$, we have
\begin{enumerate}
\item $ |a|, |b| = K$ and $|b a^{-1}|, |a^{-1} b|\ge 0.5K$;
\item $\pi_{\gamma}(\gamma(0) a^{-1})$ and $\pi_{\gamma}(\gamma(0)a^{-1} b) \in \mathcal{B}(\gamma(0), \epsilon K)$, and 
\item $\pi_{\gamma}(\gamma(m) a)$ and $\pi_{\gamma}(\gamma(m) ab^{-1}) \in \mathcal{B}(\gamma(m), \epsilon K)$.
\end{enumerate}
\end{lem}

\begin{proof}
Let $K_{0} = K_{0}(0.1\epsilon, f)$ be as in Lemma \ref{lem:quickLanding2}.

Let $\lambda > 1$ be the growth rate of $G$. For $n$ large enough, we have \[
\lambda^{n}\le \# \mathcal{S}(id, n) \le \lambda^{(1+0.1\epsilon)n}.
\]
We consider the sets 
\[\begin{aligned}
    O_1 &:= \left\{g \in \mathcal{S}(id,K): d(\gamma(0), \pi_{\gamma}(\gamma(0)g)) \geq 0.5 \epsilon K\right\}, \\
    O_{2} &:= \{g \in \mathcal{S}(id,K): d(\gamma(m), \pi_{\gamma}(\gamma(m)g)) \ge 0.5\epsilon K \},\\
\end{aligned}\]

We will argue that both of these sets are much smaller than $\mathcal{S}(id,K)$, and use a certain subset of $\mathcal{S}(id,K) \setminus (O_1\cup O_2)$ to construct our set $S$. 

To show that $O_1,O_2$ are relatively small, let us now consider a word $a$ with $|a| = K$ and $d(\pi_{\gamma}(\gamma(0)a^{-1}), \gamma(0)) \ge 0.5\epsilon K$. Then since \[
d(\pi_{\gamma}(\gamma(0) a^{-1}), \pi_{\gamma}(\gamma(0))) \ge 0.5 \epsilon K - B \ge K_{0} + 0.1\epsilon \log B + 0.1\epsilon \log K,
\]
Lemma \ref{lem:quickLanding2} asserts that there exist $p \in [\gamma(0), \gamma(0)a^{-1}]$ and $q \in \gamma$ such that $d(p, q) \le K_{2}$ and $d(p, \pi_{\gamma}(\gamma(0)a^{-1})) \le  \log |a| + K_{1}$. In this case, we have \[\begin{aligned}
d(p, \gamma(0) a^{-1}) &= d(\gamma(0), \gamma(0) a^{-1}) - d(\gamma(0), p) \\
&\le |a| -  d(\gamma(0), \pi_{\gamma}(\gamma(0) a^{-1})) + d(p, \pi_{\gamma}(\gamma(0) a^{-1})) \\
&\le K - 0.5\epsilon K +  \log K + K_{0}.
\end{aligned}
\]
In summary, \[a^{-1} = (\gamma(0)^{-1}q)\cdot(q^{-1} p) \cdot (p^{-1}\gamma(0)a^{-1})\] where, as in figure \ref{fig:decomposition}, \begin{itemize}
\item $\gamma(0)^{-1}q = \gamma(0)^{-1} \gamma(k)$ for some $k$ between $-2qK - Q$ and $2qK+Q$;
\item $|q^{-1}p| \le K_{0}$, and
\item $|p^{-1}\gamma(0)a^{-1}| \le (1 - 0.5\epsilon) K+ \log(1.5K) + K_{0}$.
\end{itemize}

\begin{figure}
    \centering
    \includegraphics[width = \textwidth]{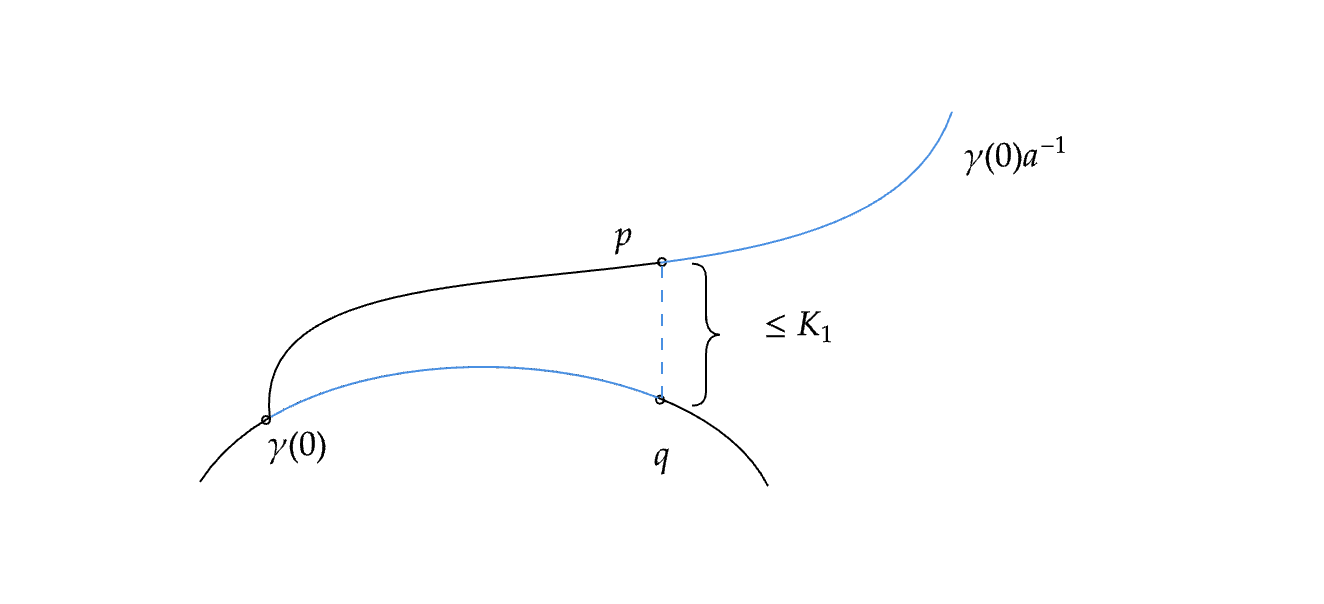}
    \caption{Decomposing $ a ^{-1} $ as a concatenation of well-controlled paths.}
    \label{fig:decomposition}
\end{figure}

For large enough $K$, the number of such elements is at most \[5QK \cdot \lambda^{(1+0.1\epsilon) (1-0.4\epsilon)K} \le 5QK \lambda^{(1-0.3 \epsilon)K}\].

Hence, the cardinality of \[
\mathcal{A} := \{ (g_{1}, \ldots, g_{100}) \in \mathcal{S}(id,K)^{100} : g_i \in O_1 \,\textrm{for some} \,\, i \in [1,100]\}
\]
is at most $100 \cdot (\# S_{0})^{99} \cdot 3QK \lambda^{ (1+0.3) \epsilon K}$---we pick some index $i$ which satisfies the given condition and draw the rest of the elements from $\mathcal{S}(id,K)$. This is exponentially small compared to $\left(\# \mathcal{S}(id,K)\right)^{100}$. 

By a similar logic,\[
\mathcal{B} := \{ (g_{1}, \ldots, g_{100}) \in \mathcal{S}(id,K)^{100} : g_i \in O_2\,\textrm{for some} \,\, i \in [1,100]\}
\] is exponentially small compared to $\mathcal{S}(id,K)^{100}$.

Finally, we observe that for each $h \in \mathcal{S}(id,K)$, there are at most 
\[\#\mathcal{B}(h,0.5K)\leq \lambda^{(1+\epsilon)0.5K}\]
elements $g$ such that $|g^{-1}h| \geq 0.5K$. 

Hence, we deduce that the cardinality of 
\[\mathcal{C} := \left\{ (g_{1}, \ldots, g_{100}) \in S_{0}^{100} : d(g_i,g_j) \leq 0.5K \textrm{for some} \,\,  i \neq j\right\}
\]
is at most $100 \cdot 99 \cdot 2 \cdot \lambda^{0.6K} \cdot (\#\mathcal{S}(id,K)^{99}$, which is exponentially small compared to $\left(\#\mathcal{S}(id,K)\right)^{100}$. Given these estimates, we conclude that for sufficiently large $K$, \[
\mathcal{S}(id,K)^{100} \setminus\big( \mathcal{A} \cup \mathcal{B} \cup \mathcal{C} \big)
\]
is nonempty.

Letting $(g_{1}, \ldots, g_{100})$ be one of its elements, we claim that the choice $S = \{g_i, i = 1,...,100\}$ satisfies the conditions of the lemma.

Note in particular that $g_i^{-1}g_j\neq id$ since its norm is at least $0.5K$. We observe that: \begin{enumerate}
\item \(g_{i}\)'s are all distinct;
\item \(|g_{i}| = K\) for all \(1 \le i\le 100\);
\item \(|g_{i} g_{j}^{-1}|\), \(|g_{i}^{-1} g_{j}|\) \(\ge 0.5K\) for each \(i\neq j\);
\item \(\pi_{\gamma}(\gamma(0) g_{i}^{-1}) \in \mathcal{B}(\gamma(0), 0.5\epsilon K)\) and \(\pi_{\gamma}(\gamma(m) g_{i}) \in \mathcal{B}(\gamma(m), 0.5\epsilon K)\) for each \(1 \le i \le 100\).
\end{enumerate}

It remains to show that \(d(\gamma(0), \pi_{\gamma}(\gamma(0) g_{i}^{-1} g_{j})) < \epsilon K\) for each \(i \neq j\). Suppose not; then for large enough $K$ we have \[
\begin{aligned}
d(\pi_{\gamma}(\gamma(0) g_{i}^{-1}), \pi_{\gamma}(\gamma(0) g_{i}^{-1}g_{j})) &\ge \epsilon K - 0.5 \epsilon K \\
&> 2 \epsilon \log K  > \epsilon \log |g_{i}| + \epsilon \log (|g_{i}|+|g_{j}|) + K_{0} \\
&\ge \epsilon \log d( \gamma, \gamma(0) g_{i}^{-1}) + \epsilon \log d( \gamma, \gamma(0) g_{i}^{-1} g_{j}) + K_{0} \\
\end{aligned}
\]
By Lemma \ref{lem:quickLanding2}, there exists \(p \in [\gamma(0) g_{i}^{-1}, \gamma(0) g_{i}^{-1} g_{j}]\) such that \[
d( p, \pi_{\gamma}(\gamma(0) g_{i}^{-1})) < \epsilon \log d(\gamma, \gamma(0) g_{i}^{-1}) + 2K_{2} \le \epsilon \log K + 2K_{0},
\]
and \(d(\gamma(0), p) < 0.6 \epsilon K\). Here, we have \[
d(p, \gamma(0) g_{i}^{-1}) \ge d(\gamma(0), \gamma(0) g_{i}^{-1}) - d(\gamma(0), p) \ge K - 0.6 \epsilon K,
\]
and \[
\begin{aligned}
d(\gamma(0), \gamma(0) g_{i}^{-1} g_{j}) &\le d(\gamma(0), p) + d(p, \gamma(0) g_{i}^{-1}(g_{j}) \\
&\le 0.6 \epsilon K + [d(\gamma(0) g_{i}^{-1}, \gamma(0) g_{i}^{-1} g_{j}) - d(\gamma(0) g_{i}^{-1}, p)] \\
&\le 0.6 \epsilon K + 0.6 \epsilon K.
\end{aligned}
\]
But this contradicts \(|g_{i}^{-1} g_{j}| \ge 0.5 K\).
\end{proof}

Given a translate of $\gamma$, we can naturally define the projection 
\[\pi_{g\gamma}(x) := g\pi_{\gamma}(g^{-1}x).\]

Since $G$ acts by isometries, this is an $(A,B)$--coarse Lipschitz projection so long as $\pi_{\gamma}$ is as well. The following lemma describes projections between translates of superlinear-divergent quasi-geodesics. 

\begin{lem}\label{lem:1segment}
Let $\alpha$ and $\beta$ be $(f, \theta)$--divergent quasi-geodesics and let $0< \epsilon < \frac{1}{10(A+1)}$. Then there exists $K_{6} > 0$ such that the following holds. Suppose $a \in G$ and $i \in \Z$ satisfy that \begin{enumerate}[label=(\roman*)]
\item $|a| > K_{6}$;
\item $\pi_{\beta}(\beta(0) a^{-1}) \in \mathcal{B}(\beta(0), \epsilon |a|)$; and 
\item$\pi_{\alpha}(\alpha(i) a) \in \mathcal{B}(\alpha(i), \epsilon |a|)$.
\end{enumerate}
Then for each $j \in \Z$, $\pi_{\alpha}(\alpha(i)a \beta(0)^{-1}\beta(j))$ is within distance $\epsilon \log |j| + 2|a|$ from $\alpha(i)$.
\end{lem}
\begin{proof}

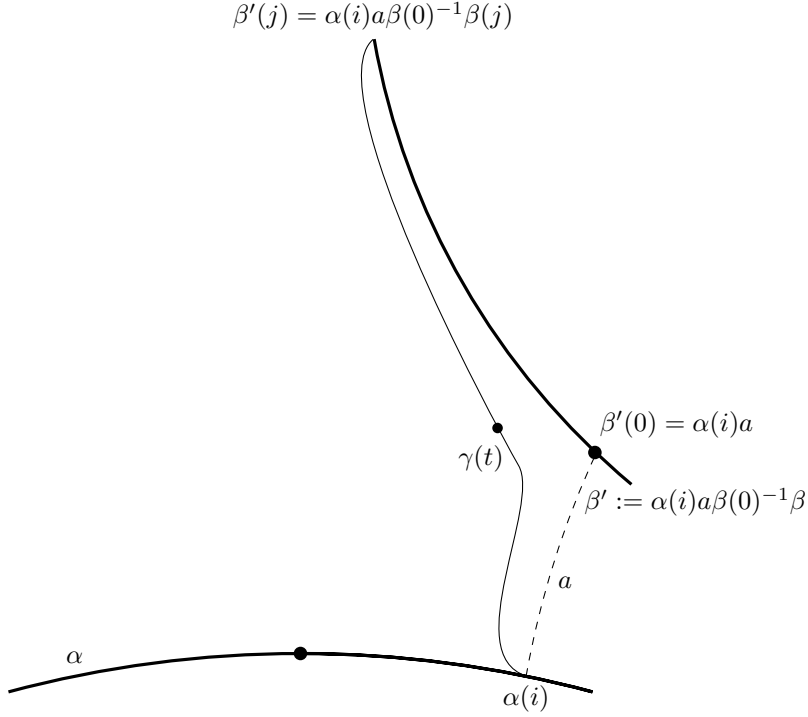
\begin{figure}[ht]
\centering
\begin{tikzpicture}
\def\c{1}
\draw[very thick] (0, 0) arc (90:75:15*\c) arc (75:105:15*\c);
\draw[very thick, shift={(4.4*\c, 2.25*\c)}] (0, 0) arc (-130:-170:10*\c);
\draw[dashed] (3*\c, -0.3*\c) arc (170:155:12*\c); 
\draw (0.965*\c, 8.17*\c) .. controls (0.2*\c, 7.5*\c) and (2.5*\c, 3.2*\c) .. (2.9*\c, 2.5*\c) .. controls (3.2*\c, 1.975*\c) and (2.1*\c, 0) .. (3*\c, -0.3*\c);

\fill (0, 0) circle (0.09);
\fill (3.914*\c, 2.674*\c) circle (0.09);
\fill (2.62*\c, 3*\c) circle (0.07);

\draw (3*\c, -0.3*\c) node[below] {$\alpha(i)$};
\draw (5*\c, 2.75*\c) node[above] {$\beta'(0) = \alpha(i) a$};
\draw (0.965*\c, 8.17*\c) node[above] {$\beta'(j) = \alpha(i) a \beta(0)^{-1} \beta(j)$};
\draw (2.4*\c, 2.9*\c) node[below] {$\gamma(t)$};
\draw (3.52*\c, 0.95*\c) node {$a$};
\draw (-3*\c, -0.05*\c) node {$\alpha$};
\draw (5.25*\c, 2.02*\c) node {$\beta' := \alpha(i) a \beta(0)^{-1} \beta$};

\end{tikzpicture}
\caption{The geodesic $[\alpha(i), \beta'(j)]$ comes near $\beta'(0)$.}
\label{fig:1segment}
\end{figure}
For simplicity, we denote and parameterize the translate of $\beta$
\[
\beta' (j) = \alpha(i) a \beta(0)^{-1} \beta (j).
\]
Let $\gamma: [0, M] \rightarrow G$ be a geodesic connecting $\alpha(i)$ and $\beta'(j)$, see Figure \ref{fig:1segment}. The projection of $\alpha(i)$ onto $\beta'$ is near $\alpha(i) a$:
\[
d\big(\alpha(i) a, \pi_{\beta'} (\alpha(i)) \big) = d \big(\beta(0), \pi_{\beta}(\beta(0)a^{-1}) \big) \le \epsilon |a|.
\]

Then there exists $t \in [0,M]$ such that $\gamma(t) \in  \mathcal{B}(\alpha(i) a, 2 \epsilon |a|)$. If $d(\beta'(j),\alpha(i)a) < 2 \epsilon |a|$, simply take $t=M$ so that $\gamma(t) = \beta'(j)$. And if  $d(\beta'(j),\alpha(i)a) \geq 2 \epsilon |a|$, we obtain such $t$ by applying Lemma \ref{lem:quickLanding2}. Notice
\begin{align*}
     & d(\pi_{\beta'}(\alpha(i)a),\pi_{\beta'}(\beta'(j)))\\
\geq & d(\alpha(i)a,\beta'(j)) - d(\alpha(i)a,\pi_{\beta'}(\alpha(i)a)) - d(\pi_{\beta'}(\beta'(j)),\beta'(j)))\\
\geq & 2 \epsilon |a| - \epsilon |a| - B\\
\geq & \epsilon (\log d(\beta'(j),\beta') + \log d(\alpha(i),\beta')) + K_1,
\end{align*}
where $K_1$ is the constant from Lemma \ref{lem:quickLanding2} taking $\delta = \epsilon$. The last inequality holds when $|a|$ is sufficiently large. Then Lemma \ref{lem:quickLanding2} implies that for some $t$,
\begin{align*}
    d(\gamma(t), \alpha(i)a) 
    & \leq d(\gamma (t), \pi_{\beta'}(\alpha(i))) + d(\pi_{\beta'}(\alpha(i)),\alpha(i)a) \\
    & \leq (\epsilon \log |a| + K_1) + \epsilon |a| \\
    & \leq 2 \epsilon |a|
\end{align*}
for sufficiently large $|a|$. 
Note that \[ t = d(\gamma(0),\gamma(t)) \le d(\alpha(i), \alpha(i) a) + 2\epsilon |a| \le \frac{3}{2} |a|. \]
Now if 
\begin{align*}
d(\pi_{\alpha}(\gamma(0)), \pi_{\alpha}(\gamma(M))) & > \epsilon \log (|j|+|a|)\\
& \geq \epsilon (\log d(\gamma(0),\alpha)+\log d(\gamma(M), \alpha)) - K_1,
\end{align*}
where $K_1$ is the constant from \ref{lem:quickLanding2} taking $\delta = \epsilon$. Then there exists $\tau \in [0, M]$ such that 
\[
d(\gamma(\tau), \pi_{\alpha}(\gamma(M))) \le \epsilon \log (|j| + |a|) + K_{1}
\]
and $\gamma|_{[0, \tau]}$ is contained in the  $K_{1}$--neighborhood of $\alpha$. Notice that $\tau$ cannot be larger than $t$, otherwise $\gamma(t)$ is $K_{1}$--close to $\alpha$; let $p \in \alpha$ be the point such that $d(\gamma(t), p) \le K_{1}$. Then when $|a|$ is sufficiently large,
\[
\begin{aligned}
d(\alpha(i), \pi_{\alpha}(\alpha(i) a)) &\ge d(\alpha(i),p) - d(\pi_{\alpha}(\alpha(i) a), p) \\
&\ge d(\alpha(i) a, \alpha(i))) - d(\alpha(i)a,q) - (A d(p, \alpha(i) a) + 2B) \\
&\ge |a| - (A+1) (2 \epsilon |a|+ K_{1}) - 2B > \epsilon |a|.
\end{aligned}
\]
This is a contradiction, so we must have $\tau \le t$. We then have \[
\begin{aligned}
d(\pi_{\alpha} (\alpha(i) a \beta(0)^{-1} \beta(j)), \alpha(i)) 
&\le d(\pi_{\alpha}(\gamma(M)),  \gamma(\tau)) + d(\gamma(\tau), \gamma(0)) \\
&\le \epsilon \log (|j| + |a|) + K_{1} + \frac{3}{2} |a| \\
&\le \epsilon \log |j| + 2|a|.
\end{aligned}
\]
\end{proof}

\section{Probabilistic part} \label{section:probablistic}

In this section, fixing a small enough $\epsilon>0$, we study the situation where a random path $(id =: Z_{0}, Z_{1}, \ldots, Z_{n})$ is seen by a superlinear-divergent direction, or to be precise, where $(Z_{i}, \ldots, Z_{i+\epsilon \log n})$ is (a part of) an $(f, \theta)$--divergent quasigeodesic and \[
(id, (Z_{i}, Z_{i+1}, \ldots, Z_{i+\epsilon \log n}), Z_{n})
\]
is $\epsilon^{2}$--aligned for some $i \ll n$. We will prove in Corollary \ref{cor:pivotEstimate} and Lemma \ref{lem:deviation} that this happens for an overwhelming probability.

To make an analogy, consider a random path $(id =: Z_{0}, Z_{1}, \ldots, Z_{n})$ arising from a simple random walk on the Cayley graph of a free group $F_{2} \simeq \langle a, b \rangle$. Here, we similarly expect that $Z_{n} = id$ is not desirable and $(id, (Z_{i}, Z_{i+1}), Z_{n})$ is aligned for some $i \ll n$. In fact, the alignment happens for all but exponentially decaying probability. A classical argument using martingales can be described as follows: \begin{enumerate}
\item construct a `score' that marks the progress made till step $i$;
\item prove that at each step $i$, it is more probable to earn a score rather than losing one.
\item sum up the difference at each step and use concentration inequalities to deduce an exponential bound.
\end{enumerate}
Here, the score at step $i$ should be determined by information up to time $i$. Moreover, when the score grows, the recorded local progresses should also pile up. To realize these features on a general Cayley graph other than tree-like ones, we employ the concatenation lemma proven in Section \ref{section:alignment}.

\subsection{Combinatorial model}\label{section:combinatorial}
In the sequel, let $\gamma$ be an $(f, \theta)$--divergent geodesic on $G$ with $\gamma(0) = id$ and $\epsilon >0$ be a small enough constant. Let us fix some constants: 
\begin{itemize}
    \item $K_{3} = K_{3}\left(f, \theta, q, Q, A, B, \epsilon^{3}, \epsilon \right)$ be as in Lemma \ref{lem:3segments};
    \item $K$ is larger than $K_{5} = K_{5}\left(\frac{1}{10q}\epsilon^{4} \right)$ and the twice of $K_{6} = K_{6}\left(0.1\epsilon^{4}\right)$ given by Lemma \ref{lem:linkage} and \ref{lem:1segment}, respectively;
    \item $N_{0}$ is a threshold such that \[
    \epsilon^{4} n > 10( K + K_{3}+ \log n)
    \]
    for all $n > N_{0}$.
\end{itemize} After multiple applications of our alignment lemmas, the power on $\epsilon$ will weaken, which is why we start with $\epsilon^4$.

Throughout this section, we will consider the following combinatorial model. Fix $w_{0}, w_{1}, \cdots \in G$. Now given a sequence of 3--tuples $s_{i} = (a_{i}, b_{i}, c_{i}) \in S^{3}$, we consider a word of the form \[
W_{k} = w_{0} \cdot a_{1}  \gamma(\epsilon \log n )  b_{1}  \gamma(\epsilon \log n)  c_{1} \cdot  w_{1} \cdots a_{k} \gamma(\epsilon \log n ) b_{k} \gamma(\epsilon \log n) c_{k}\cdot w_{k}.
\]
To ease the notation, let us also define \[
V_{k} = W_{k-1} a_{k}, \quad U_{k} = W_{k-1} a_{k} \gamma(\epsilon \log n) b_{k}
\]
We also denote 
\[s = (a_1, b_1, c_1, ..., a_k, b_k, c_k).\]

We will argue that for most choices of $s \in S^{3k}$, a certain subsequence of 
\[
\left(id,\, V_{1} \gamma|_{[0, \epsilon \log n]}, U_{1} \gamma|_{[0, \epsilon \log n]}\ldots, U_{k-1} \gamma|_{[0, \epsilon \log n]}, W_{k}\right)
\]
is well-aligned. In section \ref{section:randomwalks}, we will derive from this a deviation inequality (Lemma \ref{lem:deviation}), and deduce a central limit theorem. 

To show well-alignment, we argue analogously to \cite{boulanger2022large, gouezel2022exponential, choi2022random1}, by keeping track of times in which the random walk may travel along different translates of $\gamma|_{[0, \epsilon \log n]}$, and arguing that at most of these times, most directions of the random walk do not backtrack. To implement we need the following lemma \ref{prop:Schottky}. We remark that for the rest of the paper, whenever we discuss alignment of a sequence of points and geodesic segments, the only segments used are translates of $\gamma|_{[0, \epsilon \log n]}$. 

\begin{prop}\label{prop:Schottky}
Let $g \in G$ and let $n$ be an integer greater than $N_{0}$ and $|g|$. Let $S$ be the subset of $\mathcal{S}(id, K)$ described in Lemma \ref{lem:linkage} for $m=\epsilon \log n$. Then for any distinct $a,b \in S$, at least one of 
\[
\left(\gamma|_{[0, \epsilon \log n]}, \gamma(\epsilon \log n) ag\right) \quad\text{and}\quad \left(\gamma|_{[0, \epsilon \log n]}, \gamma(\epsilon \log n) bg \right) 
\]
is $\epsilon^{4} \log n$--aligned. Likewise, at least one of 
\[
\left(a^{-1}g, \gamma|_{[0, \epsilon \log n]}\right), \quad \text{and} \quad \left(b^{-1}g, \gamma|_{[0, \epsilon \log n]}\right)
\]
is $\epsilon^{4} \log n$--aligned.
\end{prop}

\begin{proof}
We prove the first claim only. Let $t \in \Z$ be such that $\gamma(t) = \pi_{\gamma}(\gamma(\epsilon \log n) a g)$. If $t$ is greater than $ \epsilon(1- \epsilon^{3} ) \log n$, we deduce that $\big(\gamma|_{[0, \epsilon \log n]},\, \gamma(\epsilon \log n) a g\big)$ is $\epsilon^{4} \log n$-aligned as desired. Let us deal with the remaining case: we assume \begin{equation}
    \label{eqn:propSchottky}
    t \in  (-\infty,\,\, \epsilon \log n - \epsilon^{4} \log n ].
\end{equation}
Consider two translates of $\gamma$: 
\[
\gamma_{1} =  a^{-1} \gamma(\epsilon \log n)^{-1} \gamma \qquad\text{and}\qquad \gamma_{2} = b^{-1} \gamma(\epsilon \log n)^{-1} \gamma,
\]
 and their subpaths 
\[
 \gamma_{1}' := \gamma_{1}|_{[t, \epsilon \log n]}
 \qquad \text{and} \qquad 
 \gamma_{2}' := \gamma_{2}|_{[0, \epsilon \log n]}.
 \] 
Let $\bar{\gamma}_{2}'$ be the reversal of $\gamma_{2}'$.

By the definition of $t$, $\left(\gamma(\epsilon\log n) a g, \gamma|_{[t, \epsilon \log n]}\right)$ is automatically $0$--aligned, or equivalently, $\left(g, \gamma_{1}'\right)$ is $0$--aligned. Next, since $a$ and $b$ are chosen from $S$, the subset of $\mathcal{S}(id, K)$ as described in Lemma \ref{lem:linkage}, we have that \[
	\pi_{\gamma}\left(\gamma(\epsilon \log n) a b^{-1}\right) \quad\text{is within}\quad  \mathcal{B}\left(\gamma(\epsilon \log n ), 0.1\epsilon^{4} |ab^{-1}|\right)
	\]
	and 
	\[
	\pi_{\gamma}\left(\gamma(\epsilon \log n) b a^{-1}\right) \quad\text{is within}\quad \mathcal{B}\left(\gamma(\epsilon \log n), 0.1\epsilon^{4} |ab^{-1}|\right).
	\] 
	Moreover, we have 
	\[
	|ba^{-1}| \ge 0.5K \ge K_{6}.
	\]
By plugging in $\alpha = \gamma$ and $\beta = \bar{\gamma}$ (i.e., $\beta(t) = \gamma(\epsilon \log n - t)$ for each $t \in \Z$), we can apply Lemma \ref{lem:1segment}. The required assumptions are 
\begin{align*}
 \pi_{\beta}\left(\beta(0) (ba^{-1})^{-1}\right)
 &= \pi_{\gamma}\left(\gamma(\epsilon \log n) a b^{-1}\right) \\
 &\in \mathcal{B}\left(\gamma(\epsilon \log n), 0.1\epsilon^{4} |ab^{-1}|\right) = \mathcal{B}\left(\beta(0), 0.1\epsilon^{4} \left|ab^{-1}\right|\right),
 \end{align*}
 and 
 \[
 \pi_{\alpha}\left(\alpha(\epsilon \log n), ba^{-1}\right) = \pi_{\gamma}\left(\gamma(\epsilon \log n), ba^{-1}\right) \in \mathcal{B}\left(\gamma(\epsilon \log n), 0.1 \epsilon^{4}\left|ab^{-1}\right|\right).
 \]
As a result, for each $j \in \Z$ we have
\[
d \Big(\pi_{\gamma}(\gamma(\epsilon \log n) ab^{-1} \gamma(\epsilon \log n)^{-1} \gamma(j)), \,\gamma(\epsilon \log n) \Big) \le 0.1\epsilon^{4} \log |j-\epsilon \log n| + 2|ab^{-1}|.
\]In other words, we have \[
\pi_{\gamma_{1}}(\gamma_{2}') \in \gamma_{1}\big( (\epsilon \log n - 0.1\epsilon^{4} \log (\epsilon \log n) - 2K, +\infty)\big).
\]Similarly we deduce that \[
\pi_{\gamma_{2}}(\gamma_{1}')\in \gamma_{2}\big( (\epsilon \log n -0.1\epsilon^{4} \log (\epsilon \log n) -2K, +\infty)\big).
\]We conclude that $(\gamma_{1}', \bar{\gamma}_{2}')$ is $(0.1\epsilon^{4} \log (\epsilon \log n) + 2K)$--aligned.

We now let $D = |g| + 2\epsilon \log n + 2K +K_{3}$; note that \[
D > \diam(g^{-1} \cup \gamma_{1}' \cup \gamma_{2}').
\]
Moreover, the lengths of $\gamma_{1}'$ and $\gamma_{2}'$ are at least $\epsilon^{4} \log n$ and we have \[
\epsilon^{4} \log n \ge \epsilon^{3} \log D.
\]
Finally, $(g, \gamma_{1}', \bar{\gamma}_{2}')$ is $(0.1\epsilon^{4} \log m + 2K)$--aligned, hence $0.2 \epsilon^{4} \log D$--aligned. Lemma \ref{lem:3segments} now tells us that $(g^{-1}, \bar{\gamma}_{2}')$ is $\epsilon^{4} \log D$--aligned. This implies that $(\gamma|_{[0, m]}, \gamma(m) b g)$ is $\epsilon^{4} \log n$--aligned as desired.
\end{proof}

Following Boulanger-Mathieu-Sert-Sisto \cite{boulanger2022large} and Gou{\"e}zel \cite{gouezel2022exponential}, we define the \emph{set of pivotal times} $P_{k}(s)$ inductively. We will suppress the notation $P_k := P_k(s)$ when it is unambiguous, and the remaining notation follows from the beginning of this section. First set $P_{0} = \emptyset$ and $z_{0} = id$. Given $P_{k-1} \subseteq \{1, \ldots, k-1\}$ and $z_{k-1} \in G$, $P_{k}$ and $z_{k}$ are determined by the following criteria. 
\begin{enumerate}[label=(\Alph*)]
\item When $\left(z_{k-1}, \,V_{k} \gamma|_{[0, \epsilon \log n]},\,U_{k} \gamma|_{[0, \epsilon \log n]}, W_{k}
\right)$ is $\epsilon^{3} \log n$--aligned, we set $P_{k} = P_{k-1} \cup \{k\}$ and $z_{k} = U_{k}$.
\item Otherwise, we find the maximal index $m \in P_{k-1}$ such that $(V_{m} \gamma|_{[0. \epsilon \log n]}, W_{k})$ is $\epsilon^{3} \log n$--aligned and let $P_{k} = P_{k-1} \cap \{1, \ldots, m-1\}$ (i.e., we gather all pivotal times in $P_{k-1}$ \emph{smaller than} $m$) and $z_{k} = V_{m}$. If such an $m$ does not exist, then we set $P_{k} = \emptyset$ and $z_{k} = id$.
\end{enumerate}

Given input $w_{0}, w_{1}, \ldots, w_{k} \in G$ and $s \in S^{3k}$, this algorithm outputs a subset $P_{k}(s)$ of $\{1, \ldots, k\}$. Our first lemma tells us that $P_{k}(s)$ effectively records the alignment.

\begin{lem}\label{lem:extremal}
The following holds for all $n > N_{0}$.

Let $P_{k} = \{i(1) < \ldots < i(M)\}$ and suppose that $\epsilon \log \left( |w_{0}| + \cdots + |w_{k}| + k\epsilon \log n \right) \le \log n$. Then there exist $g_{1}, \ldots, g_{N}= z_{k}$ such that $\left(V_{i(1)}, U_{i(1)}, \ldots, V_{i(M)}, U_{i(M)}\right)$ is a subsequence of $(g_{1}, \ldots, g_{N})$ and 
\[
(id,\, g_{1} \gamma|_{[0, \epsilon \log n]}, \ldots, g_{N} \gamma|_{[0, \epsilon \log n]}, W_{k})
\]
is $\epsilon^{2} \log n$--aligned.

\end{lem}

\begin{proof}
We induct on $k$. If we added a pivot, $P_{k} = P_{k-1} \cup \{k\}$, there are two cases: 
\begin{enumerate}
\item $P_{k-1}= \emptyset$. Then $(id, V_{k} \gamma|_{[0, \epsilon \log n]}, U_{k} \gamma|_{[0, \epsilon \log n]}, W_{k})$ is $(\epsilon^{3} \log n)$--aligned, with $z_{k} = U_{k}$, as desired.
\item $P_{k-1}= \{i(1) < \ldots < i(M-1)\}$ is nonempty. Then there exist $g_{1}, \ldots, g_{N}$ such that $\bigl(V_{i(1)}, \ldots, V_{i(M-1)}\bigr)$ is a subsequence of $(g_{1}, \ldots, g_{N})$, $g_{N} = z_{k-1}$ and 
\[
(id,\, g_{1} \gamma|_{[0, \epsilon \log n]}, \ldots, g_{N} \gamma|_{[0, \epsilon \log n]}, W_{k-1})
\]
is $\epsilon^{2} \log n$--aligned. Moreover, \[(z_{k-1}, V_{k}\gamma|_{[0, \epsilon \log n]}, U_{k} \gamma|_{[0, \epsilon \log n]}, W_{k})\] is $(\epsilon^{3} \log n)$--aligned. Here, since $(z_{k-1} \gamma|_{[0, \epsilon \log n]}, W_{k-1})$ is $\epsilon^{3} \log n$--aligned, 
\[
(z_{k-1} \gamma|_{[0, \epsilon \log n]}, W_{k-1} a_{k}) = (z_{k-1} \gamma|_{[0, \epsilon \log n]}, V_{k})
\]
is also $(\epsilon^{3} \log n + AK+B)$--aligned. Now Lemma \ref{lem:2segment} asserts that for large enough $n$, $(z_{k-1} \gamma|_{[0, \epsilon \log n]}, V_{k} \gamma|_{[0, \epsilon \log n]})$ is $\epsilon^{2}\log n$--aligned. As a result, \[
(id,\, g_{1} \gamma|_{[0, \epsilon \log n]}, \ldots, g_{N} \gamma|_{[0, \epsilon \log n]},\, V_{k}\gamma|_{[0, \epsilon \log n]}, \,U_{k}\gamma|_{[0, \epsilon \log n]}, W_{k})
\]
is $\epsilon^{2}\log n$--aligned, with $z_{k} = U_{k}$.
\end{enumerate}

Now suppose we backtracked: $P_{k} = P_{k-1} \cap \{1, \ldots, m-1\}$ for some $m \in P_{k-1}$. 
Letting $M = \#P_{k-1}$, so that $\#P_k = M+1$, our induction hypothesis tells us that there exist $g_{1}, \ldots, g_{N}$ such that $(V_{i(1)},U_{i(1)}, \ldots, V_{i(M+1)}, U_{i(M+1)})$ is a subsequence of $(g_{1}, \ldots, g_{N})$ and 
\[
(id,\, g_{1} \gamma|_{[0, \epsilon \log n]}, \ldots, g_{N} \gamma|_{[0, \epsilon \log n]}, W_{k-1})
\]
is $\epsilon^{2} \log n$--aligned. Moreover, we have that $(V_{i(M+1)} \gamma|_{[0, \epsilon \log n]}, W_{k})$ is $\epsilon^{3} \log n$--aligned by the criterion. It follows that \[
(id, \, g_{1} \gamma|_{[0, \epsilon \log n]}, \ldots, V_{i(M+1)} \gamma|_{[0, \epsilon \log n]}, W_{k})
\]
is $\epsilon^{2} \log n$--aligned, with $z_{k} = V_{m} = V_{i(M+1)}$, as desired.
\end{proof}

Next, we have \begin{lem}\label{lem:0thPivotCase}
Let us fix $a_{1}, b_{1}, c_{1}, \ldots, a_{k}, b_{k}, c_{k}$ and draw $a_{k+1}$, $b_{k+1}, c_{k+1}$ in $S^{3}$ according to the uniform measure. For $n \in \mathbb{N}$ sufficiently large, the probability that $\# P_{k+1} = \# P_{k} + 1$ is at least $9/10$.
\end{lem}
\begin{proof}
    We need to choose $a_{k+1}$, $b_{k+1}, c_{k+1}$ in $S^{3}$ such that 
    \[
    \left(z_{k}, \,V_{k+1} \gamma|_{[0, \epsilon \log n]},\,U_{k+1} \gamma|_{[0, \epsilon \log n]}, W_{k+1}\right)
    \] 
    is $\epsilon^{3} \log n$--aligned. By Proposition \ref{prop:Schottky}, there are at least 99 choices of $a_{k+1}$ such that 
    \[
    \left(z_{k}, \,V_{k+1} \gamma|_{[0, \epsilon \log n]}\right)
    \]
    is $\epsilon^{3} \log n$--aligned.

    Likewise, there are at least $98$ choices of $b_{k+1}$ such that both 
    \[
    (V_k, \,U_{k+1} \gamma|_{[0, \epsilon \log n]})
    \qquad\text{and}\qquad
    (V_{k+1} \gamma|_{[0, \epsilon \log n]},\,U_{k+1})
    \]
    are $\epsilon^4 \log n$--aligned. From lemma \ref{lem:2segment}, for sufficiently large $n$, this tells us there are at least $98$ choices of $b_{k+1}$ such that $(V_{k+1} \gamma|_{[0, \epsilon \log n]},\,U_{k+1} \gamma|_{[0, \epsilon \log n]})$ is $\epsilon^4 \log n$--aligned. 

    Finally, there are at least $99$ choices of $c_{k+1}$ such that $\left(U_{k+1} \gamma|_{[0, \epsilon \log n]}, W_{k+1}\right)$ is $\epsilon^3 \log n$--aligned.

    We are done as $\frac{99}{100}\cdot \frac{98}{100} \cdot \frac{99}{100} > \frac{9}{10}$.
\end{proof}

Given a sequence $s =( a_{i}, b_{i}, c_{i})_{i=1}^{k}$, we say that another sequence $s' = (a_{i}', b_{i}', c_{i}')_{i=1}^{k}$ is \emph{pivoted from} $s$ if they have the same pivotal times, $(a_{l}, c_{l}) = (a_{l}', c_{l}')$ for all $l=1, \ldots, k$, and $b_{l} = b_{l}'$ for all $l$ except for $l \in P_{k}(s)$. We observe that being pivoted is an equivalence relation.

\begin{lem}\label{lem:ChangingPivots}
Given $s =( a_{i}, b_{i}, c_{i})_{i=1}^{k}$ and a pivotal time $\ell \in P_k(s)$, construct a new sequence $s'$ by replacing $b_\ell$ with another $b'_{\ell}\in S$ such that 
\[\left(z_{\ell-1}, \,V_{\ell} \gamma|_{[0, \epsilon \log n]},\,U_{\ell} \gamma|_{[0, \epsilon \log n]}, W_{\ell}
\right)\]
is $\epsilon^{3} \log n$--aligned. Then $s'$ is pivoted from $s$. 
\end{lem}

\begin{proof}
    We need to show that both sequences $s$ and $s'$ have the same set of pivotal times. Before time $\ell$, the sequences are identical, so that $P_{j}(s) = P_{j}(s')$ for $j < \ell$. By our choice of $b'_\ell$, we know that the time $\ell$ is added as a pivot, and so $z'_\ell = U'_{\ell}$. Now we induct on $j > \ell$: suppose that all pivotal times in $P_{j-1}(s)$ are still in $P_{j-1}(s')$. 

    To determine $P_j(s)$, either we added a new pivotal time $j$ or we backtracked. In the former case, we have that $\left(z_{j-1}, \,V_{j} \gamma|_{[0, \epsilon \log n]},\,U_{j} \gamma|_{[0, \epsilon \log n]}, W_{j} \right)$ is $\epsilon^3 \log n$--aligned. Since $G$ acts on itself by isometries, this happens if and only if the sequence
    \[ \left(z'_{\ell}(z_{\ell}^{-1})z_{j-1}, z'_{\ell}(z_{\ell}^{-1})V_{j} \gamma|_{[0, \epsilon \log n]}, z'_{\ell}(z_{\ell}^{-1})U_{j} \gamma|_{[0, \epsilon \log n]}, z'_{\ell}(z_{\ell}^{-1})W_{j} \right) \] is $\epsilon^3 \log n$--aligned. But this is the same as requiring that 

     \[\left(z'_{j-1}, \,V'_{j} \gamma|_{[0, \epsilon \log n]},\,U'_{j} \gamma|_{[0, \epsilon \log n]}, W'_{j} \right)\] is $\epsilon^3 \log n$--aligned, so that $j \in P_j(s')$.

     In the latter case, we found the maximum $M$ such that $(V_{M} \gamma|_{[0. \epsilon \log n]}, W_{k})$ is $\epsilon^{3} \log n$--aligned. Since $\ell \in P_{k}(s) $, we know that $M > \ell$. Hence this is the same as requiring that  
     \[
     \left(z'_{\ell}(z_{\ell}^{-1})V_{M} \gamma|_{[0. \epsilon \log n]}, z'_{\ell}(z_{\ell}^{-1})W_{k}\right) 
     = \left(V'_{M} \gamma|_{[0. \epsilon \log n]}, W'_{k}\right)
     \]
     is $\epsilon^3 \log n$--aligned. Therefore $j \in P_{k}(s')$.
\end{proof}

Now fixing $w_{i}$'s, we regard $W_{k}$ as a random variable depending on the choice of 
\[
(a_{1}, b_{1}, c_{1}, \ldots, a_{k}, b_{k}, c_{k}),
\]
which are distributed according to the uniform measure on $S^{3k}$. 

Fixing a choice $s = (a_{1}, \ldots, c_k)$, let $\mathcal{E}_{k}(s)$ be the set of choices $s'$ that are pivoted from $s$. Since being pivoted is an equivalence relation, the collection of $\mathcal{E}_k(s)$'s partitions the space of sequences $S^{3k}$. We claim that most of these equivalence class are large: at pivotal times $\ell \in P_k$, one can replace $b_\ell$ with one of many other $b'_{\ell}$'s while remaining pivoted.

\begin{lem}\label{lem:expTailEst}
Let $s = (a_{1}, b_{1}, c_{1}, \ldots, a_{k}, b_{k}, c_{k})$. We condition on $\mathcal{E}_{k}(s)$ and we draw $(a_{k+1}, b_{k+1}, c_{k+1})$ according to the uniform measure on $S^{3}$. Then for all $j \ge 0$, \[
\Prob\left(\#P_{k+1}(s', s_{k+1}) < \#P_{k}(s') - j \, \big| \, (s', s_{k+1})\in \mathcal{E}_{k}(s) \times S^{3} \right) \le \left(\frac{1}{10} \right)^{j+1}.
\]
\end{lem}

We remark that the conditional measure $\Prob(\cdot| \mathcal{E}_k(s)\times S^3)$ on $S^{3(k+1)}$ is the same as the uniform measure on $\mathcal{E}_k(s) \times S^{3}\subset S^{3(k+1)}$, because $\Prob(\cdot)$ is the uniform measure on a finite set.

\begin{proof}
We induct on $j\geq 0$. The $j=0$ case is lemma \ref{lem:0thPivotCase}. We prove it for $j=1$. Suppose that we made some choice of $s_{k+1} := (a_{k+1}, b_{k+1}, c_{k+1})$ that lead to backtracking. We must show that for such an $s_{k+1}$,
\[
\Prob\left(\#P_{n+1}(s', s_{k+1}) < \#P_{n}(s') - 1 \, \big| \, s' \in \mathcal{E}_{k}(s)\right) \le \frac{1}{10}.
\]
To this end, we examine the final pivot $s_{\ell}$. By Lemma \ref{lem:ChangingPivots}, we can replace $b_{\ell}$ with any distinct $b'_{\ell} \in S$ such that \[\left(z_{\ell-1}, \,V_{\ell} \gamma|_{[0, \epsilon \log n]},\,U_{\ell} \gamma|_{[0, \epsilon \log n]}, W_{\ell}\right)\]
is $\epsilon^{4} \log n$--aligned. There are at least $98$ choices of such a $b'_{\ell}$, by Proposition \ref{prop:Schottky}.

Likewise, there are at least $98$ choices of $b'_{\ell} \neq b_{\ell}$ such that $(U_{\ell}\gamma_{[0, \epsilon \log n]}, W_k)$ is $\epsilon^4 \log n$--aligned. From Lemma \ref{lem:3segments}, we know that 

\[\left(V_{\ell} \gamma|_{[0, \epsilon \log n] }, \,W_k\right)\] 
is $\epsilon^3 \log n$--aligned. For this choice of $s'$, we have $P_{k+1}(s') = P_{k}(s') \cap \{0, ..., \ell - 1\}$. In particular, $\#P_{k+1} = \#P_{k} - 1$.
Hence 
\[
\Prob\left(\#P_{k+1} < \#P_{k} - 1 \, \big| \, \mathcal{E}_{k}(s), s_{k+1}\right) \le \left(\frac{4}{100}\right) < \left(\frac{1}{10}\right).
\]
To handle the induction step for $j \geq 2$, the same argument works, except we condition not only on $s_{k+1}$ but also on the final $j$ pivotal increments which resulted in backtracking.
\end{proof}

\begin{cor}\label{cor:pivotEstimate}
\[
\Prob \left( \#P_{k} \le k/2 \right) < (1/10)^{k}.
\]
\end{cor}

\subsection{Random walks}\label{section:randomwalks}

Recall that $G$ contains an $(f, \theta)$--superdivergent $(q,Q)$--quasigeodesic $\gamma : \Z\rightarrow G$ with $\gamma(0) = id$. 

Let $\mu$ be a probability measure on $G$ whose support generates $G$ as a semigroup. Passing to a convolution power if necessary, assume that $\mu(a) > 0$ for all $a$ in our finite generating set $A \subset G$. Let $(Z_n)_{n\geq1}$ be the simple random walk generated by $\mu$, and let $\alpha \in (0,1)$. We can define \[\begin{aligned}
p &= \min \{\mu(a), a \in A\},\\
\epsilon &=\frac{\alpha/100}{\log(1/p)}.
\end{aligned}
\]
so that $p^{\epsilon \log n} = n^{-\alpha/100}$. Then for any path $\eta$ of length $100 \epsilon \log n$ and any $k \in \Z$, we have \[
\Prob( (g_{k+1}, \ldots, g_{k+100 \epsilon \log n}) = \eta ) \ge n^{-\alpha}.
\]
Also recall that for any three points $o,x,y \in G$ we can define the Gromov product, given by \[(x, y)_{o} = \frac{1}{2}\left(d(o,x) + d(o,y) - d(x,y) \right).\]

We now have:

\begin{lem}\label{lem:deviation}
For any $0<\alpha < 1$, there exist $K > 0$ such that for each $x \in \mathcal{B}(id, 2n)$ we have \[
\Prob\left[(x, Z_{n})_{id} \ge n^{3\alpha})\right] \le K e^{-n^{\alpha}/K}.
\]
\end{lem}

\begin{proof}
First, we would like to find a nice decomposition of our random walk, which will allow us to analyze the sample paths using our combinatorial model in section \ref{section:combinatorial}.

Let $\overline{\lambda}_{i}$ be i.i.d. distributed according to the uniform measure on the subset $S' \subset G^5$ defined by \[
S' := \{(a, \gamma', b, \gamma', c) : a, b, c \in S, \gamma' = \gamma(\epsilon \log n)\}.
\]

Then the evaluation $\lambda_i = a\cdot\gamma '\cdot b \cdot \gamma' \cdot c$ is distributed according to the measure $\mu_S * \gamma' * \mu_S * \gamma' * \mu_S$, where $\mu_S$ is uniform over $S$.

Let $N = 3K + 2\epsilon \log n$. By our choice of $p$, for each $a,b,c \in S$ we have $\mu^{*N}(a\gamma ' b \gamma' c) \geq p^{N}$. Then we can decompose 
\[\mu^{*N} = 10^6p^{N} \left(\mu_S * \gamma' * \mu_S * \gamma' * \mu_S\right) + (1-10^6 p^{N}) \nu,\]

for some probability measure $\nu$.

Now we consider the following coin-toss model, 
Let $\rho_{i}$ be independent 0-1 valued random variables, each with probability $10^{6} \cdot p^{N}$ of being equal to 1. Also let $\xi_{i}$ be i.i.d. distributed according to $\nu$.  
We set 
\[g_i = \left\{\begin{array}{cc} \lambda_{i} & \textrm{if $\rho_{i} = 1$} \\
\xi_{i} & \textrm{otherwise}. \end{array}\right.\]

Then $(g_1 \cdots g_n)_n$ has the same distribution as $(Z_{Nn})$, because each $g_i$ is distributed according to $\mu^{*N}$. 

Hoeffding's inequality tells us that \[
\Prob\left( \sum_{i=1}^{n^{3\alpha}} \rho_{i} \ge 0.5 n^{3\alpha} \cdot n^{-\alpha}\right) \ge 1 - 2 \textrm{exp} \left( - \frac{2 (0.5 n^{2\alpha})^{2}}{n^{3\alpha}}\right) \ge 1 - 2 \exp (-0.5 n^{\alpha}).
\]
After tossing away an event of probability at most $2\exp(-0.5n^{\alpha})$, we assume $\sum_{i=1}^{n^{3\alpha}} \rho_{i} \ge 0.5 n^{2\alpha}$. 

To apply the analysis of our combinatorial model, we condition on the values of $\rho_i, \xi_{i}$ and only keep the randomness coming from the $\eta_{i}$'s. Let \[i(1) < i(2) < \ldots < i(M)\] be the indices in $[1, n^{3\alpha}]$ where $\rho_i = 1$. Then we can write \[
x^{-1} \cdot Z_{n} =  w_{0} \cdot a_{1}  \gamma(\epsilon \log n )  b_{1}  \gamma(\epsilon \log n)  c_{1} \cdot  w_{1} \cdots a_{M} \gamma(\epsilon \log n ) b_{M} \gamma(\epsilon \log n) c_{M}\cdot w_{M},
\]
where \[
\begin{aligned}
w_{0} &= x^{-1}  g_{1} \cdots g_{N(i(1)-1) - 1},\\
w_{1} &= g_{Ni(1) + 1} \cdots g_{N(i(2)-1) - 1}, \\
&\vdots \\
w_{M} &= g_{Ni(M) + 1} \cdots g_{n}
\end{aligned}
\]
and $a_{i}, b_{i}, c_{i}$ are i.i.d.s distributed according to the uniform measure on $S$. As in the previous section, we set $s = (a_1,b_1,c_1, \ldots, a_M,b_M, c_M)$. By Lemma \ref{cor:pivotEstimate}, the set of pivots $P_{M}(s)$ is nonempty with probability at least $1-(1/10)^{M} \ge 1- (1/10)^{0.5 n^{2\alpha}}$. By Lemma \ref{prop:induction}, for any pivotal time $i \in P_M(s)$ we have 
\[ 
\left(id, x^{-1} Z_{N(i-1)} \gamma|_{\epsilon \log n}, x^{-1} Z_{n}\right) 
   = \left(id, (x^{-1}Z_{N(i-1)}, \ldots, x^{-1}Z_{N(i-1)+\epsilon \log n}), x^{-1} Z_{n}\right)\] 
is $\epsilon \log n$--aligned. Lemma \ref{lem:quickLanding2} implies that $[id, x^{-1} Z_{n}]$ passes 
through the $K_{1}$--neighborhood of $(x^{-1}Z_{N(i-1)}, \ldots, x^{-1}Z_{N})$.  In other words, $[x, Z_{n}]$ passes through the $(Ni  + K_{0})$--neighborhood of $id$, which is within the $n^{3\alpha}$--neighborhood of $id$ when $n$ is large.
\end{proof}

\begin{cor}\label{cor:deviation}
For any $\alpha>0$, there exists $K'$ such that for each $0 \le m \le n$ we have
\[
\E\left[(id, Z_{n})_{Z_{m}}^{2}\right] \le n^{6\alpha} + K e^{-n^{\alpha}/K} \cdot n \le n^{6\alpha}+K'.
\]
\end{cor}

The following lemma states that our deviation inequality (Corollary \ref{cor:deviation}) implies a rate of convergence in the subadditive ergodic theorem.

\begin{lem}\label{lem:cute}
Let \[L := \lim_{n\to \infty}\frac{1}{n} \E[d(id, Z_{n})].\] Then \[L - \frac{1}{n}\E[d(id, Z_{n})]= o\left(\frac{1}{\sqrt{n}}\right).\]
\end{lem}

\begin{proof}
Note that by the definition of the Gromov product, we have \[
\E[d(id, Z_{n2^{k}})] 
= \sum_{i=1}^{2^{k}} \E\left[d(Z_{n(i-1)}, Z_{ni})\right] 
- 2\sum_{i=1}^{k} \sum_{t = 1}^{2^{k-i}} \E\left[(Z_{n 2^{i}(t-1))}, Z_{n2^{i}t})_{Z_{n(2^{i}t - 1)}}\right].
\]
Also by corollary \ref{cor:deviation}
\[
\E\left[(Z_{n 2^{i}(t-1))}, Z_{n2^{i}t})_{Z_{n(2^{i}t - 1)}}\right] \le 2(n2^{i-1})^{6\alpha} + K'
\] 
and we also know that $\E[d(Z_{n(i-1)}, Z_{ni})] = \E[d(id, Z_{n})]$ for any $i \in \mathbb{N}$. Hence for any sufficiently small $\alpha>0$, we have \[\begin{aligned}
\left|\frac{1}{2^{k}} \E[d(id, Z_{n2^{k}})] - \E[d(id, Z_{n})]\right| &\le \frac{2}{2^{k}} \sum_{i=1}^{k} 2^{k-i} \cdot (2(n2^{i-1})^{6\alpha} + K') \\
&\lesssim n^{6\alpha} \sum_{i=1}^{k} 2^{-i/2}.
\end{aligned}
\] 
As $k \rightarrow \infty$ the quantity $2^{-k} \E[d(id, Z_{n2^{k}})]$ converges to $L$. Picking $\alpha < 1/12$, we can send $k \rightarrow \infty$ and divide by $n$ to conclude.
\end{proof}

We now prove the CLT (Theorem \ref{thm:main}). It is essentially the same argument as \cite{Mathieu2020deviation}, but with a different deviation inequality as input.

\begin{proof}
We claim that for any $\epsilon>0$, there exists $N$ sufficiently large, such that the sequence 
\[
\frac{1}{\sqrt{Nk}} \bigl(d(id, Z_{Nk}) - \E[d(id, Z_{Nk})]\bigr)
\]
converges to a Gaussian distribution up to an error at most $\epsilon$ in the L{\'e}vy distance.

Indeed, the sequence 
\[
\left\{\frac{1}{\sqrt{k}} \big(d(id, Z_{k}) - \E[d(id, Z_{k})]\big)\right\}_{k > 0}
\]
is eventually $\epsilon$--close to a distribution $X$ (in the L{\'e}vy distance) if and only if its $N$--jump subsequence $\big\{\frac{1}{\sqrt{Nk}} \big(d(id, Z_{Nk}) - \E[d(id, Z_{Nk})] \big)\big\}_{k > 0}$ is as well. Moreover, from Lemma \ref{lem:cute}, we know that \[\E[d(id, Z_{Nk}) = L Nk + o(1/\sqrt{Nk}).\] 

To show the claim, we first take a sequence \[
0 = i(0) < i(1) < \ldots < i(2^{\lfloor \log_{2} k \rfloor }) =k
\]
such that $i(t+1) - i(t) = 1$ or $2$ for each $t$. The easiest way is to keep halving the numbers, i.e., 
\[
i(2^{t}k) := \left\lfloor \frac{i(2^{t}(k-1)) + i(2^{t}(k+1))}{2} \right\rfloor
\]
for each $t$ and odd $k$. Let $T$ be the collection of $i(t)$'s such that $i(t+1) - i(t) = 2$.

Then, \[
\frac{1}{\sqrt{Nk}} \big(d(id, Z_{Nk}) - \E[d(id, Z_{nk})] \big) = I_1 - I_2 - I_3
\]

where 

\[I_1 = \sum_{i=1}^{k} \frac{1}{\sqrt{k}} \left[ \frac{d(Z_{N(i-1)}, Z_{Ni}) - \E[d(Z_{N(i-1)}, Z_{Ni})]}{\sqrt{N}}\right]\]

\[I_2 =  \frac{2}{\sqrt{Nk}}\sum_{t \in T} \left( (Z_{Ni(t)}, Z_{N(i(t)+2})_{Z_{N(i(t)+1})} -  \E[(Z_{Ni(t)}, Z_{N(i(t)+2})_{Z_{N(i(t)+1})}] \right), \]

and 

\[I_3 = \frac{2}{\sqrt{Nk}}\sum_{t=1}^{\lfloor \log_{2} k - 1 \rfloor} \sum_{l = 1}^{2^{\lfloor \log_{2} k \rfloor - t - 1}} \!\! \Big((Z_{N2^{t}l}, Z_{N2^{t} (l+2)})_{Z_{N2^{t}(l+1)}} - \E\left[(Z_{N2^{t}l}, Z_{N2^{t} (l+2)})_{Z_{N2^{t}(l+1)}}\right] \Big).\]

We claim that for sufficiently large $N \in \mathbb{N}$, $I_2$ and $I_3$ are small (in terms of the L{\'e}vy distance). Then the only non-negligible term $I_1$ is a sum of i.i.d random variables, normalized to converge to a Gaussian as $k \to \infty$.

The second summation $I_{2}$ is the sum of at most $k$ independent RVs whose variance is bounded by 
\[\frac{4}{Nk} \cdot 3N^{6\alpha}. \] Hence, the second summation has variance at most 
$12 N^{6\alpha-1}$ and 
\[\Prob(|I_{2}| \ge N^{-\alpha}) \le 12 N^{8\alpha - 1}\] by Chebyshev.

Now for each $t$, \[
I_{3;t} := \frac{2}{\sqrt{Nk}}\sum_{l = 1}^{2^{\lfloor \log_{2} k \rfloor - t - 1}} \Big((Z_{N2^{t}l}, Z_{N2^{t} (l+2)})_{Z_{N2^{t}(l+1)}} - \E[(Z_{N2^{t}l}, Z_{N2^{t} (l+2)})_{Z_{N2^{t}(l+1)}}] \Big)
\]is the sum of at most $k/2^{t}$ independent RVs whose variance is bounded by $\frac{4}{Nk} \cdot 3 (N2^{t})^{6\alpha}$. This means that $I_{3;t}$ has variance at most $12 N^{6\alpha - 1} \cdot 2^{(6\alpha - 1) t}$, and 
\[\Prob(|I_{3; t}| \ge N^{-\alpha} 2^{-\alpha t}) \le 12 N^{8 \alpha - 1} 2^{(8\alpha - 1) t}\]
by Chebyshev.

Summing them up, we have 
\[|I_{2} + \sum_{t} I_{3;t}| \le N^{-\alpha} \sum_{t} 2^{-\alpha t}\]
outside a set of probability $N^{8\alpha - 1} \sum_{t} 2^{(8 \alpha - 1) t}$. These are small, regardless of the range of $t$. More precisely, by setting $\alpha = 1/10$, we deduce that 
\[|I_{2} + I_{3}| = O(N^{-1/10})\]
outside a set of probability $O(N^{-1/10})$, ending the proof.
\end{proof}

We now prove the CLT for random walks with finite $p$-th moment for some $p>2$. It suffices to show that Corollary \ref{cor:deviation} holds for such random walks.

For some $q > 0$, let $E$ be the event that $\sum_{i=1}^{n} |g_{i}|$ is at least $n^{q}$. We note the following inequality \[\begin{aligned}
\E\left[ n^{q (p-2)} \left( \sum_{i=1}^{n} |g_{i}| \right)^{2} 1_{\sum_{i=1}^{n} |g_{i}| \ge n^{q}} \right] 
&\le \E \left[ \left( \sum_{i=1}^{n} |g_{i}| \right)^{p} \right] \\
&\le \E \left[ \left( n \max_{i=1}^{n} |g_{i}|\right)^{p}\right]\\
&\le n^{p} \E \left[ \sum_{i=1}^{n} |g_{i}|^{p}\right]\\
&\le n^{p+1} \E |g|^{p}.
\end{aligned}
\]
This implies that 
\[
\E\left[ \left(\sum_{i=1}^{n} |g_{i}| \right)^{2} 1_{E}\right] \le Cn^{(p+1) - q(p-2)}.
\]
By taking $q > \frac{p+1}{p+2}$, we can keep this bounded.

Now on the event $E^c$, we argue as in Lemma \ref{lem:deviation}. We remark that the only place we used the finite support assumption was to invoke Lemma \ref{lem:extremal}. In particular, we needed 
\[\epsilon \log \left( |w_{0}| + \cdots + |w_{k}| + k\epsilon \log n \right) \le \log n,\] where $w_i$. However, on the event $E^c$, this assumption is still met, replacing $\epsilon$ with $\epsilon/q$ if necessary. Then we may still apply lemma \ref{lem:extremal}. Hence, we get \[
\E[(id, Z_{n})_{Z_{m}}^{2}] \le n^{6\alpha} + K e^{-n\alpha/K} \le 2n^{6\alpha} + K'.
\]
Given this estimate, we get:

\begin{thm}
Let $\mu$ be an admissible measure on $G$ with finite $p$--moment for some $p > 2$, and $(Z_{n})_{n}$ be the random walk on $G$ generated by $\mu$. Then there exist constants $\lambda, \sigma$ such that \[
\frac{d_{X}(o, Z_{n} o)-Ln}{\sigma \sqrt{n}} \rightarrow \mathcal{N}(0, 1).
\]
\end{thm}

\appendix
\section{Right-angled Coxeter Groups} \label{appendix:racg}

Let $\Gamma = (V,E)$ be a finite simple graph. We can define the \emph{Right-angled Coxeter group} by the presentation
\[
W_{\Gamma} = \langle v \in V|v^2, [v,w], (v,w) \in E\rangle.
\]

In this appendix we show the following

\begin{lem}\label{lem:RACG}
    Let $W_{\Gamma}$ be a Right-angled Coxeter group of thickness $ k \geq 2$. Then any Cayley graph of $\Gamma$ contains a periodic geodesic $\sigma$ which is $(f, \theta)$--divergent for some $\theta>0$ and $f(r) \gtrsim r^k$.
\end{lem}

We only need to slightly modify the proof of Theorem C given in \cite{Levcovitz2022thick}. They show that a RACG of thickness at least $k$ has divergence at least polynomial of degree $k+1$. To do this, they construct a periodic geodesic $\gamma$ such that for any path $\kappa$ with endpoints on $\gamma$ and avoiding an $r$-neighbourhood of $\gamma$'s midpoint, any segment of $\kappa$ with projection at least some constant has to have length at least $r^{k}$. By integrating they get $r^{k+1}$. For completeness, we include the proof below.

\begin{proof}
Since the claim is quasi-isometry invariant, we work on the Davis complex $\Sigma_\Gamma$. We modify the proof of Theorem C of \cite{Levcovitz2022thick}, borrowing their notation and terminology. Take the word $w$ in the proof, so that $\sigma$ is a bi-infinite geodesic which is the axis of $w$, and set $p_i = w^i$. Since the Davis complex is a CAT(0) cube complex, the nearest point projection $\pi: \Sigma_{\Gamma} \to \sigma$ is well-defined and $1$--Lipschitz. 

Let $\kappa: [0, t] \to \Sigma_\Gamma$ be a path whose projection has diameter at least $2|w|$, which is disjoint from the $|w|r$-neighbourhood around some $w^i$. As the projection of $\kappa$ has length at least $2|w|$, we can find some points $p_j, p_k$ such that
\[
\pi(\kappa(0)) < p_j < p_k < \pi(\kappa(t))
\] 
in the orientation on $\sigma$. Here $p_j, p_k = w^j, w^k$. 

For the rest of the proof, we follow \cite{Levcovitz2022thick}. For some $j \leq i < k$, let $H_i$ (resp. $K_i$) be the hyperplane dual to the edge of $\sigma$ which is adjacent to $p_i$ (resp. $p_{i+1}$) and is labeled by $s_0$ (resp. $s_n$). As hyperplanes separate $\Sigma_\Gamma$ and do not intersect geodesics twice, it follows that $H_i$ (resp. $K_i$) intersects $\kappa$. Let $e_i$ (resp. $f_i$) be the last (resp. first) edge of $\kappa$ dual to $H_i$ (resp. $K_i$). Let $\gamma_i$ (resp. $\eta_i$) be a minimal length geodesic in the carrier $N(H_i)$ (resp. $N(K_i)$) with starting point $p_i$ (resp. $p_{i+1}$) and endpoint on $e_i$ (resp. $f_i$). Let $\alpha_i$ be the subpath of $\kappa$ between $\gamma_i \cap \kappa$ and $\eta_i \cap \kappa$. As $w$ is a $\Gamma$--complete word, no pair of hyperplanes dual to $\sigma$ intersect. By our choices, $\alpha_i \cap \alpha_j$ is either empty or a single vertex for all $i \neq j$. Let $D_i$ be the disk diagram with boundary path $\gamma_i \alpha_i \eta_i^{-1} \beta_i$ where $\beta_i$ has label $w^{-1}$. For each $0 \leq i \leq r-2$, we observe the following:
\begin{enumerate}
	\item The path $\gamma_i$ is reduced. 
	\item By Lemma 7.2, no $(k-1)$--fence connects $\gamma_i$ to $\eta_i^{-1}$ in any disk diagram with boundary path $\gamma_i \alpha_i \eta_i^{-1} \beta_i$. 
	\item The path $\alpha_i$ does not intersect the ball $B_{p_i}(|w|(r))$. 
\end{enumerate}
Thus we can apply \cite[Theorem 6.2]{Levcovitz2022thick} to $D_i$ by setting, in that theorem, 
\[
\gamma = \gamma_i,\quad \alpha = \alpha_i,\quad \eta = \eta_i^{-1},\quad \beta = \beta_i, \quad\text{and}\quad  L = k - 1 R = |w|(r - i).
\]
We conclude that for $r$ large enough
\[
|\alpha_i| \geq C' (|w|(r)^k).
\]
 As $\alpha_i$ is a subsegment of $p$, we are done. 
\end{proof}

\section{Superlinear-divergence and strongly contracting axis} \label{appendix:contracting}

In this section, we give two constructions that illustrates the logical independence between superlinear divergence and strongly contracting property. We first recall the notion of strongly contracting geodesics.

\begin{definition}[Strongly contracting sets]\label{dfn:contracting}
For a subset $A \subseteq X$ of a metric space $X$ and $\epsilon > 0$, we define the \emph{closest point projection} of $x \in X$ to $A$ by \[
\pi_{A}(x) := \big\{a \in A : d_{X}(x, a)= d_{X}(x, A) \big\}.
\]
$A$ is said to be \emph{$K$-strongly contracting} if: \begin{enumerate}
\item $\pi_{A}(z) \neq \emptyset$ for all $z\in X$ and
\item for any geodesic $\eta$ such that $\eta \cap N_{K}(A) = \emptyset$, we have $\diam(\pi_{A}(\eta) ) \le K$.
\end{enumerate}
\end{definition}

\begin{lem}\label{lem:gerstenSuper}
There exists a finitely generated group $G$ containing an element  whose axis is strongly contracting but not superlinear-divergent.
\end{lem}

\begin{proof}
Let $G$ be the group constructed by Gersten in \cite{gersten1994quadratic}: \[
G = \langle x, y, t \,|\, txt^{-1} = y, xy=yx \rangle.
\]
The group $ G $ naturally acts on the universal cover of its presentation complex, which is a CAT(0) cube complex. Recall that the presentation complex of $ G $ is defined as follows: start with a single $0 $-cell, attach a $ 1 $-cell for each of the three generators $ x,y,t $, and attach a $ 2 $--cell for each of the relations $ [x,y]$ and $txt ^{-1}y ^{-1}  $. Let $ X $ be the universal cover of this complex, which Gersten shows is CAT(0) \cite[Prop. 3.1]{gersten1994quadratic}.
	 
The induced combinatorial metric on $ X $ is isometric to the word metric with respect to $\{x, y, t\}$.

Let $g = tx$ and $\gamma$ be a path connecting $(\ldots, id, t, tx, txt, (tx)^{2}, \ldots)$. Then $\gamma$ is a $g$--invariant geodesic, and $\gamma$ does not bound a flat half-plane (the cone angle of $\gamma$ at its each vertex is $3\pi/2$). Hence, $\gamma$ is rank-1 and we can conclude that $g$ is strongly contracting.

Meanwhile, by \cite[Theorem 4.3]{gersten1994quadratic}, $G$ has quadratic divergence. Given an appropriate action of $ G $ on a hyperbolic space, we would be able to conclude from \cite[Lemma 3.6]{goldsborough2021markov} that $ \gamma $ is not superlinear-divergent. Since we do not assume a hyperbolic action, we instead present a modification of Goldborough-Sisto's argument.

Suppose that there exists an $(A, B)$--coarsely Lipschitz projection $\pi_{\gamma}: G \rightarrow \gamma$, a constant $\theta > 0$ and a superlinear function $f$ such that $\gamma$ is $(f, \theta)$--divergent with respect to $\pi_{\gamma}$. Up to a finite additive error, we may assume that $\pi_{\gamma}$ takes the values $\{(zx)^{i} : i \in \Z\}$.

Let $\epsilon = \frac{1}{2(A+3)}$ and let $n$ be a sufficiently large integer. We claim: 

\begin{claim}\label{claim:gerstenClaim}
    If a point $p \in G \setminus B(id, n)$ satisfies $d(p, \gamma) \le \epsilon n$, then $\pi_{\gamma}(p) = (tx)^{i}$ for some $|i| > 0.5n$. 
\end{claim}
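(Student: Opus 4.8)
The plan is to show that being far from $id$ while staying close to $\gamma$ forces $\pi_\gamma(p)$ to lie deep inside one end of $\gamma$. The only structure we may use is that $\pi_\gamma$ is an $(A,B)$--coarsely Lipschitz projection valued in $\{(tx)^i : i \in \Z\}$ and that $\gamma$ is an isometrically embedded line through $id$; everything is extracted from these two facts.

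Concretely, I would proceed in three steps. First, pick $q \in \gamma$ with $d(p,q) = d(p,\gamma) \le \epsilon n$; since $\gamma$ is a geodesic with $\gamma(0) = id$, we get $d(id,q) \ge d(id,p) - d(p,q) > (1-\epsilon)n$, so $q$ already lies past arclength $(1-\epsilon)n$ from $id$ along $\gamma$. Second, locate $\pi_\gamma(p)$ relative to $q$: the projection axioms give $d(\pi_\gamma(q),q) \le B$ and $d(\pi_\gamma(p),\pi_\gamma(q)) \le A\, d(p,q) + B \le A\epsilon n + B$, whence $d(\pi_\gamma(p),q) \le A\epsilon n + 2B$. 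Third, since both $\pi_\gamma(p) = (tx)^i$ and $q$ lie on the geodesic $\gamma$, this last distance is measured along $\gamma$, so $(tx)^i$ lies past arclength $(1-\epsilon)n - A\epsilon n - 2B = (1 - (A+1)\epsilon)n - 2B$ from $id$. The formula $\epsilon = \tfrac{1}{2(A+3)}$ is chosen precisely so that $(A+1)\epsilon < \tfrac12$, making this a definite positive fraction of $n$; taking $n$ large enough to absorb the $O(1)$ terms (the constant $B$, the translation length of $tx$ along $\gamma$, and the finite adjustment that made $\pi_\gamma$ take values in $\{(tx)^i\}$) then gives $|i| > 0.5\, n$, with the sign of $i$ forced to agree with that of $q$'s parameter because the second step pins $\pi_\gamma(p)$ within $O(\epsilon n)$ of $q$.

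The step carrying the actual content is the second one: a priori $\pi_\gamma(p)$ could sit anywhere on $\gamma$, and it is the coarse-Lipschitz inequality $d(\pi_\gamma(p),\pi_\gamma(q)) \le A\, d(p,q) + B$ — applied against a genuine nearest point $q$, not against $id$ directly — that confines it. The only obstacle is the attendant bookkeeping: the distortion factor $A$ multiplies $\epsilon n$, so one has to confirm that $\epsilon$ is small enough relative to $A$ for $(A+1)\epsilon$ to remain below the threshold that keeps the final estimate linear in $n$ with the claimed constant. No superlinearity or CAT(0)-geometry enters here — the argument uses only the two coarse-Lipschitz projection axioms and the fact that $\gamma$ is a geodesic through the identity.
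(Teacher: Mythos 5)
Your proposal is correct and follows essentially the same route as the paper: the paper bounds $d(p,\pi_{\gamma}(p)) \le (A+1)\epsilon n + 2B$ using its earlier observation that a coarsely Lipschitz projection is comparable to the closest point projection, and then applies the reverse triangle inequality from $id$, which is exactly your computation with the nearest point $q$ made explicit. The only cosmetic difference is that you re-derive that comparison inequality by hand (and add an unneeded remark about the sign of $i$), while the paper quotes it; the numerical bookkeeping with $\epsilon = \tfrac{1}{2(A+3)}$ is identical.
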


\begin{proof}[Proof of Claim \ref{claim:gerstenClaim}]First, from $d(p, \gamma) \le \epsilon n$ and the coarse Lipschitzness of $\pi_{\gamma}$, we deduce \[
d(p, \pi_{\gamma}(p)) \le (A+1) \epsilon n + 2B.
\]
Hence, we have \[
d(id, \pi_{\gamma}(p)) \ge d(id, p) - d(p, \pi_{\gamma}(n)) \ge n - (A+1) \epsilon n - 2B > 0.5n
\]
and the claim follows.
\end{proof}

Next, we let  \[
	a_{n} = (tx)^{(1-\epsilon)n} y^{-\lfloor \epsilon n \rfloor}, b_{n} = (tx)^{-(1-\epsilon)n} y^{-\lfloor \epsilon n \rfloor}
\]
and let $\eta$ be an arbitrary path in $G \setminus B(id, n)$ connecting $a_{n}$ and $b_{n}$. Let $m, m' \in \Z$ be such that $\pi_{\gamma}(a_{n}) = (tx)^{m}$ and $\pi_{\gamma}(b_{n}) = (tx)^{m'}$. We then have \[
d((tx)^{n}, \pi_{\gamma}(a_{n})) \le d((tx)^{n}, a_{n}) + d(a_{n}, \pi_{\gamma}(a_{n})) \le  (A+2) \epsilon n + 2B < 0.5 n.
\]
It follows that $m > n - 0.5n \ge 0.5n$. Similarly, we deduce $m' < -0.5n$.

We examine the two connected components of $\eta \setminus N_{\epsilon n}(\gamma)$ as well as $\eta \cap N_{\epsilon n}(\gamma)$. Each component of $\eta \cap N_{\epsilon n}(\gamma)$ attains values of $\pi_{\gamma}(\cdot)$ in 
\[
\{(tx)^{i} : i < -0.5 n\} \qquad \text{or} \qquad\{(tx)^{i} : i > 0.5n\},
\] 
by Claim \ref{claim:gerstenClaim}, but not in both (by the coarse Lipschitzness of $\pi_{\gamma}$). Meanwhile, the endpoints of $\eta$ attain values of $\pi_{\gamma}(\cdot)$ in $\{(tx)^{i} : i < -0.5 n\}$ and $\{(tx)^{i} : i > 0.5n\}$, respectively. As a result, there exists a subsegment $\eta'$ of $\eta$, as a component of $\eta \setminus N_{\epsilon n}(\gamma)$, such that 
\[
\pi_{\gamma}(\eta'^{+}) \in \{(tx)^{i} : i > 0.5n\}
\qquad \text{and} \qquad 
\pi_{\gamma}(\eta'^{-}) \in \{(tx)^{i} : i < -0.5n\}.\]
It follows that the length of $\eta'$ is at least $(n/\theta) \cdot f(\epsilon n)$. Since $\eta$ is longer than $\eta'$, we deduce that an arbitrary path in $G \setminus B(id, n)$ connecting $a_{n}, b_{n} \in B(id, n)$ is longer than $(n / \theta) \cdot f(\epsilon n)$. When $n$ increases, this contradicts the quadratic divergence of $G$. Hence, we deduce that $\gamma$ is not superlinear-divergent.
\end{proof}

\begin{lem}
There exists a proper geodesic metric space $X$ that contains a superlinear-divergent geodesic $\gamma$ that is not strongly contracting.
\end{lem}

\begin{proof}
Let $X = \mathbb{H}^{2}$ and $\gamma$ be a bi-infinite geodesic $\gamma$ on $X$ with respect to the standard Poincar{\'e} metric $ds_{0}^{2}$. Let $o \in \gamma$ be a reference point on $\gamma$ and  let $\proj_{\gamma}$ be the closest point projection onto $\gamma$ with respect to $ds_{0}^{2}$. For each $x \in X$, let $r$ be the (directed) distance from $x$ to $ \gamma $ and let $\tau$ be the (directed) distance from $o$ to $\proj_{\gamma}(x)$. Since $(r, \tau)$ is an orthogonal parametrization of $X$, there exists a continuous coefficient $F_{0}$ such that \[
ds_{0}^{2} = dr^{2} + F_{0}(x) d\tau^{2}
\]
holds at each point $x \in X$. We note that $F_{0}(x) \sim e^{\kappa r(x)}$ for some $\kappa > 0$ (due to the Gromov hyperbolicity of $(X, ds_{0}^{2})$) and $F_0(x) \ge 1$.

We will now define a new metric $ds^{2}$ as follows. For each $i > 0$ and $j \in \Z$ let 
\[
I_{i, j} = \{(r, \tau) : r = 4^{2^{i}}, 2j +i\le \tau \le  2j + i+1\},
\]
and let 
\[
S := \bigcup_{i > 0, \ j \in \Z} I_{i, j}.
\]
Let $\chi: \mathbb{R}^{2} \rightarrow [0, 1]$ be a smooth function that takes value 0 on $S$ and 1 on $\mathbb{R}^{2} \setminus N_{0.1}(S)$. We finally define 
\[
F(x) := F_{0}(x) \cdot \chi(r(x), \tau(x)) + (1 - \chi(r(x), \tau(x)))
\]
and 
\[
ds^{2} := dr^{2} + F(x) d\tau^{2}.
\]
First, $\proj_{\gamma}$ is still the closest point projection with respect to $ds^{2}$. Indeed, the shortest path from $x \in X$ to $\gamma$ is the one that does not change in the value of $\tau$. As a corollary, the $K$-neighborhoods of $\gamma$ with respect to the two metrics coincide.

Let $i$ be a positive integer and let $x, y \in X$ be such that $r(x) = r(x) = 4^{2^{4i}}$ and $\tau(x) = 0$, $\tau(y) = 2i$. We first consider a path $\eta$ connecting $x$ to $y$ while passing through $N_{K}(\gamma)$. Then the total length is at least $2 \cdot (4^{2^{4i}} -K)$. Next, we take a piecewise geodesic path $\eta'$ that goes like: 
\begin{align*}
(r, \tau) = (4^{2^{4i}}, 0) &- (4^{2^{4i}}, 1) - (4^{2^{4i-1}}, 1) - (4^{2^{4i-1}}, 2) - \cdots \\
 &- (4^{2^{3i+1}}, i) - (4^{2^{3i}}, i) - (4^{2^{3i}}, i+1) - (4^{2^{3i+1}}, i+1) - \cdots - (4^{2^{i}}, 2i).
\end{align*}
Then the total length is $2(4^{2^{4i}} - 4^{2^{3i}}) + 2i$. Note also that $\eta'$ does not intersect $N_{K}(\gamma)$. We conclude that the geodesic connecting $x$ to $y$ does not touch $N_{K}(\gamma)$. Note also that the projection is larger than $2i$. By increasing $i$, we conclude that $\gamma$ is not $K$--strongly contracting for any $ K>0 $.

Meanwhile, it is superlinear-divergent. To see this, suppose a path $\eta$ lies in $X \setminus N_{R}(\gamma)$ and satisfies $\pi_{\gamma}(\eta) > 4$. Then $\pi_{\gamma}(\eta)$ contains $[2k, 2k+2]$ for some integer $k$, and by restricting the path if necessary, we may assume $\pi_{\gamma}(\eta) = \gamma([2k, 2k+2])$.

If $r(\eta)$ ever takes two values among $\{4^{2^{i}}  : i > 0\} \cap [R, +\infty)$, say $4^{2^{m}}$ and $4^{2^{m'}}$ for some $m < m'$, then the total variation of $r(\eta(t))$ is at least 
\[
4^{2^{m+1}} - 4^{2^{m}} = 4^{2^{m}} (4^{2^{m}} - 1) \ge R^{2}/2.\]
Consequently, we have $l(\eta) \ge 0.5R^{2}$.

If not, $r(\eta)$ takes at most one value $4^{2^{i}}$ among $\{4^{2^{j}} : j > 0\}$. If $i$ is even, then 
\[
F(\eta(t)) = F_{0}(\eta(t))
\]
for $t$ such that $\tau(\eta(t)) \in [2k+1.1, 2k+1.9]$. Since 
\[
F_{0}(\eta(t)) \ge e^{\kappa r(\eta(t))} \ge e^{ \kappa R},
\]
we have
\[
l(\eta) \ge \int F(\eta) \,d\tau(\eta) \ge e^{\kappa R} \times 0.8 = 0.8^{\kappa R}.
\]
Similarly, we have $l(\eta) \ge 0.8 e^{\kappa R}$ when $i$ is odd. This concludes that $\gamma$ is superlinear-divergent.
\end{proof}

Finally, we remark that superlinear divergence is invariant under quasi-isometry but the notion of strongly contracting is not. For example, let $X$ be the Cayley graph of a group $G$ equipped with the word metric associated to some finite generating set $\mathcal{S}$ and let $Z$ be a superlinear-divergent set in $X$. Then changing the generating set changes the metric in X by a quasi-isometry, and hence, $Z$ is still a superlinear-divergent set. But if $\gamma$ is a strongly contracting geodesic in $X$ it may not be strongly contracting with respect to the new metric. 

As an explicit example, it was shown in \cite[Theorem C]{SZ} that each mapping class group admits a proper cobounded action on a metric space $X$ such that all pseudo-Anosov elements have strongly contracting quasi-axes in X. To contrast, it was shown in \cite[Theorem 1.4]{RV} that the the mapping class group of the five-times punctured sphere can be equipped with a word metric such that the axis of a certain pseudo-Anosov map in the Cayley graph is not strongly contracting.

\medskip
\bibliographystyle{alpha}
\bibliography{superdivergent}

\end{document}